\newcommand{\N}{\mathbb{N}}
\newcommand{\R}{{\mathbb{R}}}
\newcommand{\C}{{\mathbb{C}}}
\newcommand{\dd}{{{\rm d}}}
\newcommand{\ii}{{\rm i}}
\newcommand{\cf}{\emph{cf.}}
\newcommand{\ie}{{\emph{i.e.}}}
\newcommand{\eg}{{\emph{e.g.}}}
\newcommand{\la}{\lambda}
\newcommand{\eps}{\varepsilon}
\newcommand{\Dom}{{\operatorname{Dom}}}
\renewcommand{\Re}{\operatorname{Re}}
\renewcommand{\Im}{\operatorname{Im}}
\newcommand{\sgn}{\operatorname{sgn}}
\newcommand{\supp}{\operatorname{supp}}
\newcommand{\BigO}{\mathcal{O}}
\newcommand{\CcR}{{C_0^{\infty}(\R)}}
\newcommand{\ls}{\lesssim}
\newcommand{\gs}{\gtrsim}
\theoremstyle{plain}
\newtheorem{theorem}{Theorem}[section]
\newtheorem{lemma}[theorem]{Lemma}
\theoremstyle{definition}
\newtheorem{example}[theorem]{Example}
\newtheorem{remark}[theorem]{Remark}
\newtheorem{asm}{Assumption}
\newcommand\cH{\mathcal H}
\newcommand\cL{\mathcal L}
\newcommand\cW{\mathcal W}
\def\set@curr@file#1{%
	\begingroup
	\escapechar\m@ne
	\xdef\@curr@file{\expandafter\string\csname #1\endcsname}%
	\endgroup
}
\def\quote@name#1{"\quote@@name#1\@gobble""}
\def\quote@@name#1"{#1\quote@@name}
\def\unquote@name#1{\quote@@name#1\@gobble"}
\newcommand{\Cla}{\C \setminus (-\infty,0]}
\numberwithin{equation}{section}
\begin{document}

\title[Pseudospectra of the DWE with unbounded damping]{Pseudospectra of the damped wave equation with unbounded damping}

\author{Ahmet Arifoski}
\address[Ahmet Arifoski]{Kantonsschule Olten, Hardfeldstrasse 53, 4600 Olten, Switzerland}
\email{ahmet.arifoski@kantiolten.ch}
\author{Petr Siegl}
\address[Petr Siegl]{
	School of Mathematics and Physics, Queen's University Belfast, University Road, Belfast, BT7 1NN, UK}
\email{p.siegl@qub.ac.uk}

\thanks{The research of P.S. and in particular work on this paper was supported by the \emph{Swiss National Science Foundation} Ambizione grant No.~PZ00P2\_154786 (until December 2017) and by FCT  (Portugal) project PTDC/MAT-CAL/4334/2014. We would like to thank Mark Embree for numerical investigations of pseudospectra carried out during the NOSEVOL Workshop in Berder in July 2013 which provided first insights.}

\subjclass[2010]{34E20,35L05,47A10}

\keywords{damped wave equation, unbounded damping, pseudospectrum}

\date{November 20, 2019}

\begin{abstract}
We analyze pseudospectra of the generator of the damped wave equation with unbounded damping. We show that the resolvent norm diverges as $\Re z \to - \infty$. The highly non-normal character of the operator is a robust effect preserved even when a strong potential is added. Consequently, spectral instabilities and other related pseudospectral effects are present.
\end{abstract}

\maketitle

\section{Introduction}

We consider a linear damped wave equation
\begin{equation}\label{DWE.def}
\partial_t^2 u(t,x)+2a(x) \partial_t u(t,x)=\left(\partial_x^2-q(x)\right) u(t,x), \quad t>0,\quad x\in \R,
\end{equation}
with a non-negative damping $a$ that is unbounded at infinity and a non-negative potential $q$ that is also possibly unbounded. As demonstrated in recent works \cite{Freitas-2018-264, Ikehata-2018}, new effects occur due the unboundedness of $a$. In particular, the new spectral features investigated in \cite{Freitas-2018-264} is the ``overdamping at infinity'' reflected in the presence of the essential spectrum $(-\infty,0]$ responsible for the loss of an exponential energy decay of solutions; for polynomial decay estimates see~\cite{Ikehata-2018}. This paper deals with a more subtle pseudospectral analysis, which reveals highly non-normal character of the semigroup generator $G$, see \eqref{DWE.system}, similarly as for Schr\"odinger operators with complex potentials, see \eg~\cite{Davies-1999-200,Dencker-2004-57,Krejcirik-2015-56,Krejcirik-2019-276}. 
Thus this type of the ``overdamping'' is responsible also for strong spectral instabilities.

Traditionally, the second order wave equation~\eqref{DWE.def} can be rewritten as the first order system 
\begin{equation}\label{DWE.system}
\partial_t 
\begin{pmatrix}
u \\
v
\end{pmatrix}
=
\underbrace{
	\begin{pmatrix}
	0 & I \\
	\partial_x^2 - q & - 2 a  
	\end{pmatrix}}_{G}
\begin{pmatrix}
u \\
v
\end{pmatrix}
\end{equation}
so that the semigroup theory can be employed. Indeed, it was established in \cite{Freitas-2018-264} that, under suitable regularity assumptions on $a$ and $q$, the operator $G$ generates a contraction semigroup in a natural Hilbert space 
\begin{align} 
\cH:=\mathcal{W}( \R )\times L^2( \R ), 
\quad 
\left\langle 
\begin{pmatrix}\psi_1  \\  \psi_2 \end{pmatrix},\begin{pmatrix}\phi_1  \\  \phi_2 \end{pmatrix}\right\rangle_{\cH}
=
\langle\phi_1',\psi_1'\rangle+\langle q^{\frac{1}{2}}\phi_1,q^{\frac{1}{2}}\psi_1\rangle+\langle\phi_2,\psi_2\rangle,
\end{align}
where $\langle \cdot, \cdot \rangle$ and $\|\cdot\|$ are $L^2(\R)$ inner product and norm, respectively, and $\cW(\R)$ is the completion of the pre-Hilbert space $(C_{0}^{\infty}(\R),(\|\partial_x\cdot\|^2+\|q^{\frac{1}{2}}\cdot \|^2)^{\frac{1}{2}})$. Further estimates on the energy decay of solutions for unbounded damping in higher dimensions were performed  in \cite{Ikehata-2018} and polynomial rates established.

The main goal of this work is to study pseudospectra of $G$, aiming at lower estimates of $\|(G-\la)^{-1}\|_{\cL(\cH,\cH)}$ for $\la$ in the left complex half-plane, where the numerical range of $G$ is located. More precisely, we identify curves $\Gamma$ in the left complex half-plane and construct pseudomodes on them, \ie~families $\{\Psi_\la \in \Dom(G) \, : \, \la \in \Gamma\}$ of compactly supported smooth functions satisfying
\begin{equation}\label{G.Omega}
\lim_{\substack{\la \to \infty \\ \la \in \Gamma}} \frac{\|(G-\la)\Psi_\la\|_\cH}{\|\Psi_\la\|_\cH} = 0.
\end{equation}
Our results provide estimates on the decay rates in \eqref{G.Omega} as $\la \to \infty$, which can be turned into lower estimates of the resolvent norm, as well as a description of admissible curves $\Gamma$. Both the rates and curves depend on growth and regularity of $a$ and $q$, see Theorem~\ref{Thm:exp} and examples in Sections~\ref{subsec:ex1} and \ref{subsec:ex2}; see also Remark~\ref{rem:reg}.

In the special case of a monomial damping $a(x) = x^{2m}$, $m \in \N$, and no potential, \ie~$q=0$, a corollary of Theorem~\ref{Thm:exp} reads 
\begin{theorem}\label{thm:intro}
Let $G$ be as in \eqref{DWE.system} where (with some $m\in\N$)
\begin{equation}
q(x)=0, \qquad a(x)=x^{2m}.
\end{equation}
Then, for every $N \in \N$, there exists a $\la$-dependent family of functions $\{\Psi_{\la}\} \subset C_{0}^{\infty}(\R)^2$ such that 
\begin{equation}\label{G.la.est.intro}
\|(G-\la)\Psi_\la \|_{\cH}  = \BigO(|\Re \la|^{-N}) \|\Psi_\la \|_{\cH}
\end{equation}
as $\Re \la \to -\infty$, provided that $\Im \la$ satisfies
\begin{equation}\label{Im.la.intro}
|\Im \la| \geq |\Re \la|^{-\frac{1}{2m} + \eps}, 
\end{equation}
for some $\eps >0$ independent of $\la$.

\end{theorem}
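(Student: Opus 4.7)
The strategy is a WKB pseudomode construction for a scalar reduction of the system, matching the hypotheses of the general Theorem~\ref{Thm:exp}.

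First, I would reduce to a scalar equation by the ansatz $\Psi_\la=(u_\la,\la u_\la)^{T}$. Inserting into \eqref{DWE.system} with $q=0$ gives
\begin{equation}
(G-\la)\Psi_\la=\begin{pmatrix}0\\ L_\la u_\la\end{pmatrix},\quad L_\la u:=u''-\la(\la+2x^{2m})u,\quad \|\Psi_\la\|_\cH\geq |\la|\,\|u_\la\|,
\end{equation}
so \eqref{G.la.est.intro} reduces to producing a smooth compactly supported $u_\la$ with $\|L_\la u_\la\|_{L^2}=\BigO(|\la|\,|\Re\la|^{-N})\,\|u_\la\|_{L^2}$.

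Second, I would take $u_\la(x)=\chi_\la(x)\exp(\phi_\la(x))$ with $\phi_\la=\phi_0+\phi_1+\dots+\phi_N$ constructed inductively: the leading phase solves the eikonal $(\phi_0')^2=\la(\la+2x^{2m})$, with the branch chosen so that $\Re\phi_0$ decreases away from a chosen centre $x_\la$, and each $\phi_k$ is obtained from the standard transport equation $2\phi_0'\phi_k'+\phi_{k-1}''+\sum_{j=1}^{k-1}\phi_j'\phi_{k-j}'=0$. The cutoff $\chi_\la$ is supported on an interval on whose boundary $\Re(\phi_\la-\phi_\la(x_\la))$ is very negative, so that the commutator $[L_\la,\chi_\la]e^{\phi_\la}$ is exponentially smaller than $e^{\phi_\la(x_\la)}$; the remaining residual $L_\la e^{\phi_\la}$ is $\BigO(|\la|^{-N})\,e^{\phi_\la}$ by the $N$-step truncation of the transport hierarchy.

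The admissibility condition \eqref{Im.la.intro} would be pinned down by analysing the turning points $x^{*}$, i.e.\ the roots of $\la+2x^{2m}=0$: for real $\la<0$ they lie on the real axis at $\pm(-\la/2)^{1/(2m)}$ and the WKB phase degenerates, while for general $\la$ they are displaced off the real axis by an amount of order $|\Im\la|\,|\Re\la|^{1/(2m)-1}$. Requiring this displacement to dominate the natural WKB length at the turning point, so that all $N$ transport corrections and the cutoff commutator remain uniformly controllable, yields exactly the threshold $|\Im\la|\gtrsim|\Re\la|^{-1/(2m)+\eps}$. The principal obstacle is precisely this uniformity: every constant must be tracked explicitly in $\la$ along the admissible region, and the smoothness of $a(x)=x^{2m}$ (together with the polynomial growth of all its derivatives on the $\la$-dependent support) is what permits the iteration to be pushed to arbitrary $N$, producing the superpolynomial rate claimed in \eqref{G.la.est.intro}.
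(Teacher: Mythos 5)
Your first two steps coincide with the paper's own route: the reduction via $\Psi_\la=(u_\la,\la u_\la)^t$ to the quadratic pencil $T(\la)=-\partial_x^2+2\la a+\la^2$ is exactly \eqref{Psi.la.def}--\eqref{G.T.Psi}, and the cut-off WKB ansatz with an $n$-term phase is the content of Theorem~\ref{Thm:exp} and Example~\ref{Ex:pol.2}. The genuine gap is in your third step, where the threshold \eqref{Im.la.intro} is supposed to be ``pinned down'' by the displacement of the turning points $x^{*}$ (roots of $\la+2x^{2m}=0$) off the real axis. That is not the mechanism, and the construction as you describe it omits precisely the quantitative choices from which the threshold actually follows. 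In the paper the pseudomode is centred at the point $b$ determined by $a(b)=|\Re\la|=\alpha$, i.e.\ $b=\alpha^{1/(2m)}$; this is where the symbol $\zeta=-\la^2-2\la a$ is real and positive, so that $\Re$ of the phase has a genuine (quadratic) maximum there, and it lies at a fixed multiplicative distance $2^{1/(2m)}$ from the turning point, which the support of the pseudomode (radius $\delta=b^{1-\eps'}=o(b)$) never approaches. The restriction on $\beta=\Im\la$ comes from comparing two scales at $b$: the Gaussian concentration width $\bigl(\tfrac{\beta}{\alpha+\beta}a'(b)\bigr)^{-1/2}$ of $|g|$ (Lemma~\ref{Lem:Re.psi.-1.est}) against the cut-off radius $\delta$, which cannot exceed $o(b^{-\nu})=o(b)$ because $a$ must stay comparable to $a(b)$ on the support. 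Demanding superpolynomial smallness of the commutator terms, i.e.\ $\tfrac{\beta}{\alpha+\beta}a'(b)\delta^2\gs b^{\eps''}$ as in \eqref{kappa1.def}, gives $\beta\gs b^{-1+\eps}=\alpha^{-1/(2m)+\eps/(2m)}$, which is \eqref{Im.la.intro}; the rate $\BigO(|\Re\la|^{-N})$ then comes from $\kappa_2(b)=\BigO(b^{-(n-1)(2m+1)-2})$ with $n$ large.

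Your turning-point heuristic does not survive quantification. Taken literally --- the displacement $|\Im x^{*}|\approx\beta\,\alpha^{1/(2m)-1}$ should dominate the natural length scale \emph{at} the turning point, namely the Airy width $|2\la a'(x^{*})|^{-1/3}\approx\alpha^{-2/3+1/(6m)}$ --- it yields $\beta\gs\alpha^{1/3-1/(3m)}$, which is far more restrictive than $\alpha^{-1/(2m)+\eps}$ for every $m$ and even grows for $m\ge 2$. The stated threshold only emerges if one instead compares with the wavelength $|\zeta|^{-1/2}\approx\alpha^{-1}$ valid \emph{away} from the turning point, at which point the turning point plays no role and the ``derivation'' is an assertion of the answer. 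To close the gap you must specify the centre $x_\la$ (via $a(x_\la)=|\Re\la|$, so that $\Im\zeta$ changes sign there), the support radius $\delta$ and its upper bound coming from the near-constancy of $a$, and the resulting Gaussian width; the admissible region for $\Im\la$ is then forced by these three scales, not by the location of the zeros of the symbol.
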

The decay estimate in \eqref{G.la.est.intro} can be made more precise, taking into account also the size of $\Im \la$, and the potential $q$ can be added, which may affect the estimate on the shape of the pseudospectral region $\Omega$ if $q$ is much stronger at infinity than $a$, see Examples~\ref{Ex:pol.1}, \ref{Ex:pol.2} for details. 
\begin{figure}[htb!]
	\includegraphics[width= 0.6 \textwidth]{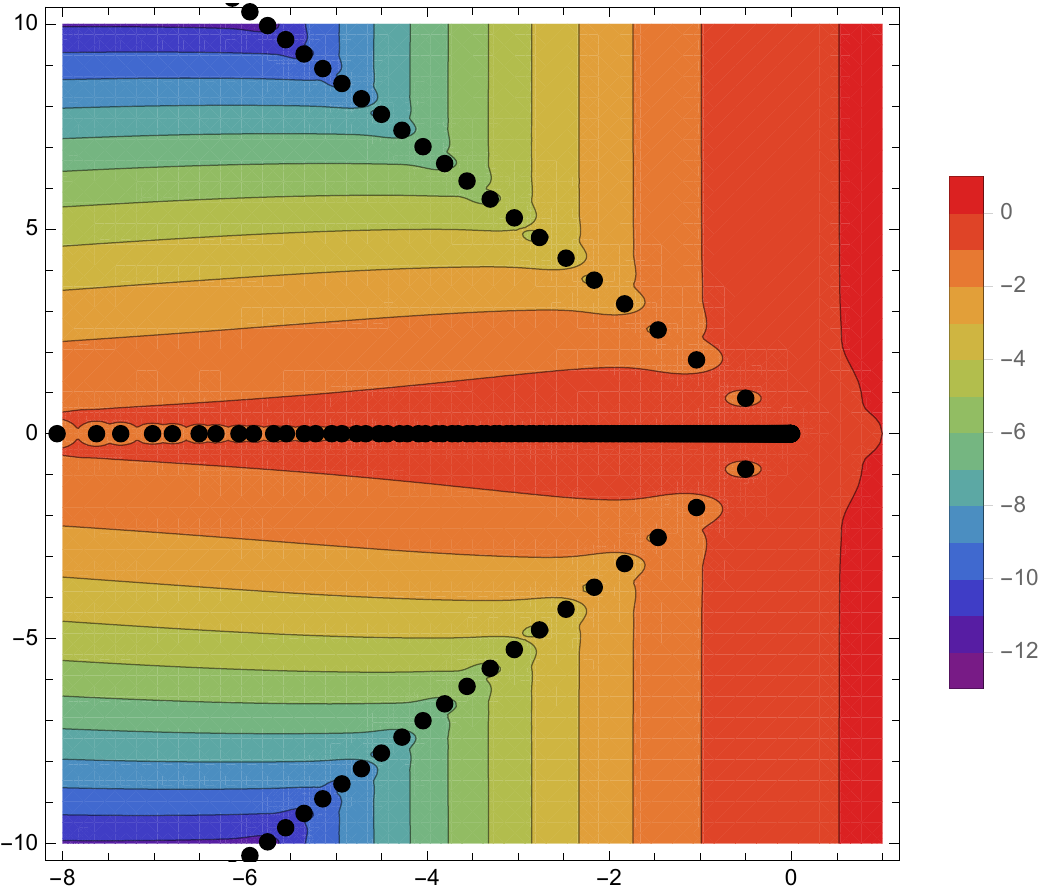}
	\caption{Numerical computation of the spectrum (in black, see \eqref{sp.x^2}) and the pseudospectra ($\log_{10}$ scale, approximation by an $800\times800$ matrix) of $G$ with $a(x)=x^2$ and $q(x)=0$, $x\in \R$.}
	\label{fig:x2}
\end{figure}

Figure \ref{fig:x2} is an illustration of Theorem~\ref{thm:intro} for $a(x) =x^2$ and $q(x)=0$. In this case, the spectrum is explicit (as well as for other monomial dampings, see \cite[Prop.~6.1]{Freitas-2018-264}), namely
\begin{equation}\label{sp.x^2}
\sigma(G) = (-\infty,0] \ \dot \cup \ \left\{ 2^{\frac{1}{3}} e^{ \pm \ii \frac{2}{3} \pi} (2k+1)^{\frac{2}{3}} \right\}_{k \in \N_0 }.
\end{equation}
Clearly, pseudospectra do not localize at a neighborhood of the discrete spectrum and the shape of the pseudospectral region $\Omega$ seems to be in agreement with the analytic result, see \eqref{Im.la.intro}. We note that although the spectrum of this example (also for $a(x) = x^{2m}$) is contained in a sector, in fact lying on three rays, see \eqref{sp.x^2}, the generated semigroup cannot be analytic due to the ``bad'' resolvent behavior in the left-complex half-plane, see \eg~\cite[Sec.~8.4]{Davies-2007}. Moreover, unlike the essential spectrum $(-\infty,0] $ that can be shifted to the left (even made to disappear completely) by adding the potential $q$ of a comparable strength to $a$ at infinity, see \cite[Sec.~7]{Freitas-2018-264}, a vast pseudospectral region is preserved even if the potential $q$ strongly dominates the damping $a$, see Theorems~\ref{Thm:bas}, \ref{Thm:exp} and examples in Sections~\ref{subsec:ex1}, \ref{subsec:ex2}. 

Pseudospectral behavior, more precisely the decay rates and the shape of pseudospectral region $\Omega$, is well-studied for Schr\"odinger operators with complex potentials $V$,
\begin{equation}
H = -\partial_x^2 + V(x),
\end{equation}
see~\cite{Krejcirik-2019-276}; results for semi-classical operators can be found for instance in~\cite{Davies-1999-200,Dencker-2004-57}. The occurrence of the non-trivial pseudospectral region $\Omega$ here is due to the \emph{imaginary part} of $V$ and it essentially depends on its behavior and size at infinity comparing to $\Re V$, see \cite{Krejcirik-2019-276}. 
Our problem originates in equation \eqref{DWE.def} with real coefficients. Nevertheless, the associated spectral problem for $G$ is closely related with the quadratic operator function having the form of a Schr\"odinger operator
\begin{equation}\label{T.def}
T(\la)=-\partial_x^2+q+2\la a+\la^2, \quad \la \in \Cla
\end{equation}
with a ``$\la$-dependent potential'' $q+2\la a+\la^2$ which is \emph{complex} in general. 
In more detail, the spectral equivalence between $G$ and $T$
\begin{equation}
\la\in\sigma(G) \iff 0\in\sigma(T(\la)), \qquad \la \in \Cla,
\end{equation}
was established in a quite general setting with possibly unbounded and singular coefficients, \cf~\cite{Freitas-2018-264}. On a formal level, such equivalence is immediate for eigenvalues since one can check that 
\begin{equation}\label{GT.EF}
(G-\la)(\psi, \phi)^t =0 \iff \phi = \la \psi \quad \text{and} \quad T(\la) \psi =0.
\end{equation}
In fact, the link between $G$ and $T$ is used also for the pseudospectral analysis here. We search for the pseudomodes of $G$ in the form 
\begin{equation}\label{Psi.la.def}
\Psi_\la:=(f_\la,\la f_\la)^t,
\end{equation}
which reduces the problem to the pseudomodes for $T$ since then
\begin{equation}\label{G.T.Psi}
\dfrac{\|(G-\la)\Psi_\la \|_{\cH}^2}{\| \Psi_\la \|_{\cH}^2}
=
\dfrac{\| T(\la)f_\la \|^2}{\| f_\la'\|^2+\| q^{\frac{1}{2}}f_\la\|^2+|\la|^2\| f_\la \|^2}.
\end{equation}
Thus the construction of pseudomodes is based on a complex WKB-method applied for the quadratic operator function $T(\la)$, \cf~\eqref{T.def}. Technically, this goes beyond the simpler semi-classical setting, \cf~for instance~\cite{Davies-1999-200}, due to different powers of $\la$, and the non-semiclassical one for Schr\"odinger operators, \cf~\cite{Krejcirik-2019-276}, due to the ``$\la$-dependent potential'' $q+2 \la a+ \la^2$. Since more than a local behavior of the damping $a$ is required in our approach, the spirit of the performed estimates is closer to the non-semiclassical case, nonetheless, the quadratic dependence on $\la$ in the ``potential'' brings new obstacles and effects. In particular, already basic pseudomodes with one term in the phase, see~\eqref{f.basic}, yield decay in \eqref{G.T.Psi} for various behaviors of $a$ although it is possibly slower and for a smaller region $\Omega$, see examples in Sections~\ref{subsec:ex1}, \ref{subsec:ex2}. Intuitively, this difference to Schr\"odinger operators occurs due to the large parameter $\la$ in front of the damping $a$. 

To achieve better accessibility of the results and proofs, we first construct pseudomodes with a simple basic ansatz, \cf~Section~\ref{sec:basic}, and give examples where these results can be applied, \cf~Section~\ref{subsec:ex1}. In the second step, we employ an ansatz with a phase expansion, \cf~Section~\ref{sec:exp} and demonstrate on examples, \cf~Section~\ref{subsec:ex2}, the improvements for the decay rates and pseudospectral region $\Omega$ comparing to the first step. 

Finally, although we do not give explicit claims here, we remark that our method can be adapted in a straightforward way also to problems with a singular damping like $a(x)=1/x^\alpha$, $\alpha >1$, $x\in (0,1)$. The pseudomodes in this case are constructed with shrinking supports strictly inside $(0,1)$ and so the results are local and independent of possible boundary conditions at $0$. Naturally, our results have also straightforward corollaries for higher dimensional problems allowing for separation of variables and their perturbations; we omit a discussion and explicit claims on these as well.

\subsection{Notations}
We fix some notations used throughout the paper. For positive and negative real numbers we write $ \R _+:=(0,\infty)$ and $ \R _-:=(-\infty,0)$, respectively. To have a short expression for an "integer interval" we use the double brackets, $[[m,n]]:=[m,n]\cap \mathbb{Z}.$ 
Given an interval $I \subset  \R $, the norm of $L^{p}(I)$ is denoted by $\|\cdot\|_{p,I}$. If $I= \R $ we abbreviate $\|\cdot\|_{p,\R}=\|\cdot\|_{p}$; for the most frequent $L^2( \R )$-norm we use $\|\cdot\|:=\|\cdot\|_{2}$. 
To avoid many appearing constants, we employ the convention that $a\ls b$ if there exists a constant $C>0$, independent of any relevant variable or parameter, such that $a\leq Cb$; the relation $a\gs b$ is introduced analogously. By $a\approx b$ it is meant that $a\ls b$ and $b\gs a$.

\section{Basic ansatz}
\label{sec:basic}

We recall that the pseudomodes are constructed for the associated quadratic operator function $T(\la)$, \cf~\eqref{T.def}, having the form of a Schr\"odinger operator with a $\la$-dependent potential. Following this point of view, it is natural that the most basic ansatz for the pseudomode has the form
\begin{equation}\label{f.basic}
f_{\la}:=\xi g, \qquad 
g(x)=\exp \left(\pm \ii\int_{b}^{x}\sqrt{-\la^2-2\la a(t)-q(t)} \, \dd t\right)
\end{equation}
with a suitably chosen $\la$-dependent point $b$, see \eqref{la.par}, \eqref{b.def}, and a $\la$-dependent cut-off  $\xi \in \CcR$, see \cite{Krejcirik-2019-276} for details on Schr\"odinger operators. 

Due to the accretivity of $-G$, see \eg~\cite{Freitas-2018-264} for details, the construction is relevant only for $\la$ in the left complex half-plane, \ie~in the second and third quadrant. It suffices to analyze the second quadrant as $a$ and $q$ are assumed to be real and the third quadrant can be reached by complex conjugation. Hence, we parametrize $\la$ in the following way
\begin{equation}\label{la.par}
\la=-\alpha+\ii\beta,  \qquad \alpha > 0, \quad \beta > 0
\end{equation}
and we choose $-\ii$ in the formula for $g$ in \eqref{f.basic}. This choice guarantees that the principal complex square root, \ie~for $z\in\mathbb{C}\setminus \R _{-}$ we take
\begin{equation} \label{square-root}
\sqrt{z}= \sqrt{\dfrac{|z|+\Re(z)}{2}} + \sgn(\Im(z)) \ii \sqrt{\dfrac{|z|-\Re(z)}{2}},
\end{equation} 
which is always used here, is continuous since, for $x \in \supp \xi$, the values of $-\la^2 - 2 \la a -q$ stay away from $\R_-$, see \eqref{zeta.def} and \eqref{Re.zeta}. 

Up to the cut-off $\xi$, the basic ansatz has the form $e^{h(x)}$ with a complex valued function $h$ which we want to behave essentially as $-(x-b)^2$ with $b \to +\infty$. To this end, we search for the point $b\in \R_+$ such that $\Re h(b)=0$ and $\Re h$ does not change the sign at $0$. It will be showed that the suitable choice is given by the equation
\begin{equation}\label{b.def}
\alpha=a(b),
\end{equation}  
see \eqref{Re.Im.zeta}. The cut-off function $\xi$ is chosen such that 
\begin{equation} \label{xi.def}
\begin{aligned}
&\xi\in C_{0}^{\infty}( \R ), \quad \ 0\leq\xi\leq 1, &\\&
\xi(b+s)=1, \quad  |s|<\dfrac{\delta}{2}, &\\&
\xi(b+s)=0, \quad  |s|>\delta.
\end{aligned}
\end{equation}
Notice that $\xi$ can be selected in such a way that
\begin{equation}\label{xi'.est}
\|\xi^{(j)}\|_{\infty}\ls\delta^{-j}, \quad j=1,2.
\end{equation}

For notational convenience in further sections and also as a preparation for the ansatz with the phase expansion, we introduce a function $\psi_{-1}$ by
\begin{equation}\label{psi-1.bas}
\la\psi_{-1}(x)=\int_{b}^{x}\sqrt{-\la^2-2\la a(t)-q(t)} \, \dd t, \quad x\in\supp(\xi)
\end{equation}
and so $f_\la$ can be written as
\begin{equation}\label{f.la.bas}
f_\la := \xi g = \xi \exp(- \ii \la \psi_{-1}).
\end{equation}

\subsection{The main result for the basic ansatz}

In this section, we work under the following assumption. We construct pseudomodes supported in $\R_+$ and use the behavior of $a$ and $q$ in $\R_+$ only. The construction can be clearly repeated in $\R_-$ if the assumptions on $a$ and $q$ are adjusted accordingly.

\begin{asm}\label{Asm:bas}
Suppose that the functions $a \in C^2(\R)$, $q\in C^{1}( \R )$ satisfy the following conditions:
\begin{enumerate}[(a)]
\item $a$, $q$ are non-negative for sufficiently large $x$:
\begin{equation}
\forall x\gs 1, \quad a(x)\geq0 \quad \text{and} \quad q(x) \geq 0;
\end{equation}
\item $a$ is increasing for sufficiently large $x$ and unbounded at infinity:
\begin{align}
&\forall x\gs 1, \quad a'(x) > 0, \label{asm:bas.a.incr}
\\
&\lim_{x\to+\infty}a(x)=+\infty; \label{asm:bas.a.unbd}
\end{align}
\item the derivatives of $a$ are controlled by $a$:
\begin{equation}
\exists \nu \geq-1, \quad \forall x \gs 1, \quad |a^{(j)}(x)| \ls x^{j\nu }a(x), \quad j=1,2.
\label{asm:bas.a'}
\end{equation}
\end{enumerate}
\end{asm}

We recall that the condition \eqref{asm:bas.a'} guarantees almost a constant behavior of functions $a$ and $a'$ on sufficiently small intervals, namely if $\Delta= o(x^{-\nu})$ as $ x \to +\infty$, then
\begin{equation}\label{a.const}
\dfrac{a^{(j)}(x+\Delta)}{a^{(j)}(x)}\approx1, \quad j =0,1, \quad x \to +\infty;
\end{equation}
the detailed proof can be found \eg~in \cite[Sec.~3]{Krejcirik-2019-276}. 

The result on pseudomodes is formulated for curves in the left-complex half-plane with $\Re \la = - \alpha \to - \infty$. As $b$ is defined via the equation $\alpha=a(b)$, see \eqref{b.def}, by assumptions \eqref{asm:bas.a.incr} and \eqref{asm:bas.a.unbd}, we can write $b \to +\infty$  instead of $\alpha \to +\infty$.

\begin{theorem}\label{Thm:bas}
Let Assumption~\ref{Asm:bas} hold and let $b\in \R _+$ be defined by $\alpha = a(b)$. Take $\eps>0$ and define 
\begin{equation}\label{Thm:bas.asm.delta}
\delta:= b^{-\nu - \eps} 
\end{equation}
and
\begin{equation}\label{q.j.def.1}
q^{(j)}_b := \| q^{(j)}\|_{\infty,(b-\delta,b+\delta)}, \quad j = 0,1.
\end{equation}

Suppose that there exists a $b$-dependent $\beta=\beta(b)>0$ such that the following conditions hold as $b \to + \infty$
\begin{align}
\label{Thm:bas.asm.cutoff}
\forall c>0, \quad
\left(\frac{1}{\delta} + \frac1{\alpha+\beta} \frac1{\delta^2}\right)\exp \left(-c\frac{\beta}{\alpha+\beta} a'(b) \delta^2 \right)&=o(1), 
\\
\label{Thm:bas.asm.q0}
q_b^{(0)} &=o(\alpha^2+\beta^2),
\\
\label{Thm:bas.asm.aq}
\alpha^2 b^{2 \nu} + q_b^{(1)}
&=o(\alpha^2+\beta^2). 
\end{align}
Let $\{f_\la\}$ with $\la= \la(b) = -\alpha + \ii \beta$, see \eqref{la.par}, 
be a family of functions constructed as in \eqref{f.la.bas} with $\xi$ as in \eqref{xi.def} and the choice of $\alpha$, $\beta$ and $\delta$ as above.

Then, for $\Psi_\la = (f_\la,\la f_\la)^t$, we have
\begin{equation}\label{Thm:bas.claim}
\dfrac{\|(G-\la)\Psi_\la\|_{\cH}}{\|\Psi_\la\|_{\cH}} = o(1), \quad b \to + \infty.
\end{equation}
\end{theorem}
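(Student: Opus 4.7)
The plan is to exploit the reduction \eqref{G.T.Psi} and carry out a complex WKB estimate for the quadratic operator function $T(\la)$. Since the denominator in \eqref{G.T.Psi} is at least $|\la|^2\|f_\la\|^2$, it suffices to prove that $\|T(\la)f_\la\| = o(|\la|\,\|f_\la\|)$ as $b \to +\infty$. Thus the whole argument collapses to a pseudomode construction for the $\la$-dependent Schrödinger operator $T(\la)$.

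Writing $\zeta(t) := -\la^2 - 2\la a(t) - q(t)$, the ansatz $g = \exp(-\ii\la\psi_{-1})$ is, by construction, designed so that $T(\la)g = \ii \zeta'/(2\sqrt{\zeta})\,g$ on $\supp\xi$: the $(q + 2\la a + \la^2)g$ part of $T(\la)g$ cancels the $(\sqrt{\zeta})^2 g$ piece produced by $-g''$, leaving only the $\sqrt{\zeta}'$ term. Applying the product rule to $f_\la = \xi g$ then gives
\begin{equation*}
T(\la)f_\la \;=\; -\xi'' g \;+\; 2\ii\,\xi'\sqrt{\zeta}\,g \;+\; \ii\,\xi\,\frac{\zeta'}{2\sqrt{\zeta}}\,g,
\end{equation*}
so the numerator in \eqref{G.T.Psi} splits into two ``cut-off'' pieces supported on the annulus $\delta/2 < |x-b| < \delta$ and one ``WKB remainder'' piece supported on all of $\supp \xi$.

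Next I would analyse $\zeta$ near $x=b$ using $\alpha = a(b)$: one has $\Re\zeta(x) = \alpha^2 + \beta^2 + 2\alpha(a(x) - \alpha) - q(x)$ and $\Im\zeta(x) = -2\beta(a(x) - \alpha)$. The choice $\delta = b^{-\nu - \eps}$ combined with \eqref{a.const} forces $a$ to be quasi-constant on $\supp\xi$, and \eqref{Thm:bas.asm.q0} handles $q$; together these yield $\Re\zeta(x) \approx \alpha^2 + \beta^2$ and $\Im\zeta(x) \approx -2\beta a'(b)(x-b)$ throughout $\supp\xi$. Using the explicit formula \eqref{square-root} for the principal square root this translates into $\Im\sqrt{\zeta(x)} \approx -\beta a'(b)(x-b)/\sqrt{\alpha^2+\beta^2}$ and hence, after integration,
\begin{equation*}
|g(x)| \;\ls\; \exp\!\left(-\,c\,\frac{\beta}{\sqrt{\alpha^2+\beta^2}}\, a'(b)\,(x-b)^2\right),
\end{equation*}
a genuine Gaussian centred at $b$. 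A matching pointwise lower bound $|g(x)| \ge 1/2$ on an interval around $b$ of length $\approx\min\!\bigl(\delta,\,(\sqrt{\alpha^2+\beta^2}/(\beta a'(b)))^{1/2}\bigr)$ delivers the lower bound on $\|f_\la\|^2$.

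Finally I would assemble the estimates. The two cut-off pieces in $T(\la)f_\la$ are pointwise bounded by $(\delta^{-2} + \delta^{-1}\sqrt{\alpha^2+\beta^2})$ times the exponentially small factor $\exp(-c\beta a'(b)\delta^2/\sqrt{\alpha^2+\beta^2})$; dividing by $|\la|^2\|f_\la\|^2$ (which is only polynomially small in $\delta$, $\alpha$, $\beta$, $a'(b)$) and using $\alpha+\beta \approx \sqrt{\alpha^2+\beta^2}$ one obtains precisely the hypothesis \eqref{Thm:bas.asm.cutoff}, where the freedom to choose $c$ arbitrarily large lets the exponential absorb all polynomial factors. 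For the WKB remainder one has $|\zeta'| = |{-2\la a' - q'}| \ls \sqrt{\alpha^2+\beta^2}\,\alpha b^{\nu} + q_b^{(1)}$ on $\supp\xi$ by \eqref{asm:bas.a'} and \eqref{q.j.def.1}, while $|\sqrt{\zeta}| \gs \sqrt{\alpha^2+\beta^2}$, so
\begin{equation*}
\frac{\|\xi \zeta'/\sqrt{\zeta}\,g\|^2}{|\la|^2\|f_\la\|^2} \;\ls\; \frac{\alpha^2 b^{2\nu}}{\alpha^2+\beta^2} + \frac{(q_b^{(1)})^2}{(\alpha^2+\beta^2)^2},
\end{equation*}
which is $o(1)$ by \eqref{Thm:bas.asm.aq}. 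The main obstacle is keeping the argument uniform across the two regimes $\alpha \gs \beta$ and $\alpha \ll \beta$ simultaneously, which is exactly why the lower bound on $\|f_\la\|^2$ must be tracked in the two cases according to whether the intrinsic Gaussian scale $(\sqrt{\alpha^2+\beta^2}/(\beta a'(b)))^{1/2}$ is smaller or larger than $\delta$.
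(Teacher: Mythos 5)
Your proposal is correct and follows essentially the same route as the paper: the reduction via \eqref{G.T.Psi} with the denominator bounded below by $|\la|^2\|\xi g\|^2$, the same splitting of $T(\la)f_\la$ into the two cut-off terms and the remainder $\ii\xi\zeta'/(2\sqrt{\zeta})\,g=\ii\xi\la\psi_{-1}''g$, the same two-sided analysis of $\zeta$ and the Gaussian localization of $|g|$, and the same use of \eqref{Thm:bas.asm.cutoff}--\eqref{Thm:bas.asm.aq} in the final assembly (cf.\ Lemmas~\ref{Lem:Re.zeta}--\ref{Lem:bas.R}). The only difference is cosmetic: you lower-bound $\|\xi g\|^2$ by the intrinsic Gaussian width $\min(\delta,L)$, whereas the paper restricts the integral to $(b-\delta/(2k),b+\delta/(2k))$ with $k^2=2c_1/c_2$ to get $\delta\exp(-\tfrac{c_2}{4}\tfrac{\beta}{\alpha+\beta}a'(b)\delta^2)$; both suffice under hypothesis \eqref{Thm:bas.asm.cutoff}.
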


The conditions in Theorem~\eqref{Thm:bas} have a complicated structure since they combine several competing terms with a different origin. In detail, the size of the cut-off, \ie~the choice of $\delta$, the shape of the curve along which $\la$ tends to infinity, \ie~the choice of $\beta$, the growth of the damping, \ie~the size of $a'(b)$, and the size and growth of the potential $q$, \ie~the size of $q$ and $q'$, play against each other. 

The conditions~\eqref{Thm:bas.asm.q0}, \eqref{Thm:bas.asm.aq} guarantee that $q$ can be treated as a small perturbations in the estimates and that the remainder, see \eqref{R.bas.def} below, is small. The condition \eqref{Thm:bas.asm.cutoff} ensures that the cut-off $\xi$ produces small terms. For simplicity, we assume that the limit in \eqref{Thm:bas.asm.cutoff} is zero for all $c>0$, nevertheless, it suffices to verify this only for one constant $c=c_2/8$ appearing in the proof. However, to estimate the size of $c_2$, more precise information on $a'$ would be needed, see Lemma~\ref{Lem:Re.psi.-1.est}. As we show in examples, see Section \ref{subsec:ex1}, also the stronger condition \eqref{Thm:bas.asm.cutoff} can be verified easily. Notice also that \eqref{Thm:bas.asm.cutoff} is always satisfied if $\nu<0$ and when a sufficiently small $\eps>0$ is chosen, thus \eg~for polynomial-like functions with $\nu=-1$.

\subsection{Strategy and technical lemmas}

We recall that due to \eqref{G.T.Psi}, we need to estimate $\|T(\la) f_\la\|$.  
With regard to our ansatz for $f_{\la}$, see \eqref{f.la.bas}, \eqref{psi-1.bas}, we arrive at
\begin{equation}\label{T.f.la}
\begin{aligned}
T(\la)f_{\la}&=-\xi''g-2\xi'g'-\xi g''+(2\la a+q+\la^2)\xi g&
\\
&
=-\xi''g+2\xi' \ii \la\psi_{-1}'g+\xi \left[\ii \la\psi_{-1}''+\la^2(\psi_{-1}')^2 g+2\la a+q+\la^2\right] g.
\end{aligned}
\end{equation}
Recall that we aim to having the function $g$ essentially as a Gaussian concentrated around $b$. Notice that the first two terms in \eqref{T.f.la} contain derivatives of the cut-off $\xi$ for which we clearly have
\begin{equation}\label{xi'.supp}
\supp(\xi^{(j)})\subset[b-\delta,b-\delta/2]\cup[b+\delta/2,b+\delta], \quad j \geq 1. 
\end{equation}
As $g$ is constructed such that $|g|$ is exponentially localized around $b$, the terms $\|\xi''g\|^2 $ and $\|\xi'\la\psi_{-1}'g\|^2$, both divided by $|\la|^2\|\xi g \|^2$, are expected to be small as $b \to +\infty$. Also the term $\|\xi \la\psi_{-1}''g\|^2$ is expected to be small when divided by $|\la|^2\|\xi g \|^2$ due to the assumption \eqref{asm:bas.a'}. However, there is no reason why the remaining terms
in \eqref{T.f.la} should be small and so we impose this by requiring 
\begin{equation}\label{zeta.def}
\zeta:=\la^2(\psi_{-1}')^2=-\la^2-2\la a-q.
\end{equation} 
Hence the remainder to be controlled reads
\begin{equation}\label{R.bas.def}
R:= \ii \xi \la \psi_{-1}''.
\end{equation}

As mentioned before, the idea is to find a point $b\in \R $ such that $\Im \zeta=0$, see \eqref{psi-1.bas}, \eqref{f.la.bas} and \eqref{square-root}. To this end, observe that 
\begin{equation}\label{Re.Im.zeta}
\Re \zeta =-\alpha^2+\beta^2+2\alpha a -q,
\qquad 
\Im\zeta =2\beta(\alpha-a),
\end{equation}
which leads to the choice of $b$ as in \eqref{b.def}, \ie~$\alpha=a(b)$.

It is important to notice that, using \eqref{Thm:bas.asm.q0}, we obtain
\begin{equation}
\Re \zeta(b)=-\alpha^2+\beta^2+2\alpha a(b) -q(b)=\alpha^2+\beta^2-q(b)\approx \alpha^2+\beta^2>0.
\end{equation}
The next lemma shows that this remains true also in the $\delta$-neighborhood of $b$, more precisely for all $t\in \supp(\xi)$. 

\begin{lemma}\label{Lem:Re.zeta}
Let Assumption \ref{Asm:bas} hold, let $b\in\mathbb{R_+}$ be defined by the equation \eqref{b.def}, let $\eps>0$, $\delta$ be as in \eqref{Thm:bas.asm.delta} and $\zeta$ be as in \eqref{zeta.def}. Suppose that $q$ satisfy \eqref{Thm:bas.asm.q0}, \ie~
\begin{equation}\label{asm:bas.sum.dq}
q^{(0)}_b=o(\alpha^2+\beta^2), \quad b \to + \infty.
\end{equation}
Then, for all $t\in(b-\delta,b+\delta)$, we have as $b \to + \infty$ that 
\begin{equation}\label{Re.zeta}
\Re\zeta (t)\approx \alpha^2+\beta^2.	
\end{equation}
\end{lemma}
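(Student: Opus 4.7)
The strategy is to isolate the ``main part'' $\alpha^2+\beta^2$ of $\Re \zeta$ coming from the value at $b$, and show that the two remaining contributions on the small interval $(b-\delta,b+\delta)$ are negligible.

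First, I would use the definition $\alpha=a(b)$ to rewrite the identity \eqref{Re.Im.zeta} as
\begin{equation}
\Re\zeta(t)=\alpha^2+\beta^2+2\alpha\bigl(a(t)-a(b)\bigr)-q(t), \qquad t\in(b-\delta,b+\delta).
\end{equation}
The term $q(t)$ is immediately controlled by hypothesis \eqref{asm:bas.sum.dq}, since $|q(t)|\leq q_b^{(0)}=o(\alpha^2+\beta^2)$.

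The core of the argument is the estimate of $a(t)-a(b)$. Here I would apply the mean value theorem together with the ``almost constant'' property \eqref{a.const}: the choice $\delta=b^{-\nu-\eps}=o(b^{-\nu})$ ensures that \eqref{a.const} applies on $(b-\delta,b+\delta)$, hence
\begin{equation}
\max_{s\in(b-\delta,b+\delta)}|a'(s)|\lesssim b^{\nu}a(b)=\alpha b^{\nu},
\end{equation}
using \eqref{asm:bas.a'} with $j=1$ at $s=b$. Therefore
\begin{equation}
2\alpha|a(t)-a(b)|\leq 2\alpha\,\delta\max_{(b-\delta,b+\delta)}|a'|\lesssim \alpha^{2}b^{\nu}\delta=\alpha^{2}b^{-\eps}=o(\alpha^2).
\end{equation}

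Combining the two bounds yields
\begin{equation}
\Re\zeta(t)=\alpha^2+\beta^2+o(\alpha^2+\beta^2), \qquad b\to+\infty,
\end{equation}
uniformly in $t\in(b-\delta,b+\delta)$, which is precisely the claim \eqref{Re.zeta}. The only slightly delicate point is verifying that $\delta=b^{-\nu-\eps}$ is compatible with \eqref{a.const} (so that $a'$ may be frozen at $b$), but the gain of the extra factor $b^{-\eps}$ coming from $\delta b^{\nu}$ is exactly what makes the $a(t)-a(b)$ contribution lose against the dominant $\alpha^2$; no use of assumption \eqref{Thm:bas.asm.aq} is needed at this stage.
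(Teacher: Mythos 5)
Your argument is correct and essentially identical to the paper's: both hinge on the mean value theorem combined with \eqref{asm:bas.a'}, \eqref{a.const} and the choice $\delta=b^{-\nu-\eps}$ to get $a(t)-a(b)=o(a(b))$, and then absorb $q(t)$ via \eqref{asm:bas.sum.dq}. The only difference is cosmetic: you expand around $\alpha^2+\beta^2$ directly, while the paper writes $\Re\zeta(t)=\alpha(a(t)+a(t)-\alpha)+\beta^2-q(t)$ before invoking the same estimate.
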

\begin{proof}
We give a detailed proof for $t>b$ only, the case $t<b$ is very similar.
By the mean value theorem, the assumption \eqref{asm:bas.a'}, the property \eqref{a.const} and the choice of $\delta$, see~\eqref{Thm:bas.asm.delta}, we get for each $t \in (b,b+\delta)$ that
\begin{equation}\label{atb.est}
a(t)-a(b)=a'(\eta)(t-b) 
\ls
a(b) b^\nu \delta
=
o(a(b)), \quad b \to +\infty. 
\end{equation}
Using \eqref{atb.est}, \eqref{a.const}, \eqref{b.def} and the assumption on $q$, see \eqref{asm:bas.sum.dq}, for all $t\in(b,b+\delta)$, we get that 
\begin{equation}
\begin{aligned}
\Re \zeta (t)&= \alpha(a(t)+a(t)-\alpha)+\beta^2-q(t)
\approx \alpha^2 (1-o(1))+\beta^2-q(t)
\approx \alpha^2+\beta^2
\end{aligned}
\end{equation}
as $b \to +\infty$.
\end{proof}

Next, we aim to get two-sided estimates of $|g|$, for which we have to estimate $\Re(\ii \la\psi_{-1})$. As a preparation we have the following lemma.
\begin{lemma}\label{Lem:zeta.est}
Let the assumptions of Lemma~\ref{Lem:Re.zeta} hold. Then for all $t\in(b-\delta,b+\delta)$ 
\begin{equation}\label{|zeta|.est}
(\Re \zeta (t))^{\frac{1}{2}}+|\Im \zeta (t)|^{\frac{1}{2}}\approx \alpha+\beta, \quad b \to +\infty.
\end{equation}
\end{lemma}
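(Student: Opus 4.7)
The plan is to reduce the statement to Lemma~\ref{Lem:Re.zeta} together with the mean-value estimate \eqref{atb.est} that already appears in its proof. Since $\alpha, \beta > 0$, we have the elementary equivalence $\alpha + \beta \approx (\alpha^2 + \beta^2)^{\frac12}$, so it suffices to show that $(\Re \zeta(t))^{\frac12} + |\Im \zeta(t)|^{\frac12} \approx (\alpha^2 + \beta^2)^{\frac12}$ uniformly on $(b-\delta, b+\delta)$.

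The lower bound is essentially for free. Lemma~\ref{Lem:Re.zeta} already gives $\Re \zeta(t) \approx \alpha^2 + \beta^2$, so $(\Re \zeta(t))^{\frac12} \gs (\alpha^2 + \beta^2)^{\frac12}$, and adding the non-negative term $|\Im \zeta(t)|^{\frac12}$ preserves this inequality. The matching upper bound on $(\Re \zeta(t))^{\frac12}$ is likewise part of the conclusion of Lemma~\ref{Lem:Re.zeta}.

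The only thing left is an upper bound on $|\Im \zeta(t)|^{\frac12}$. Here I would use the explicit formula $\Im \zeta(t) = 2\beta(\alpha - a(t))$ from \eqref{Re.Im.zeta} combined with the defining relation $\alpha = a(b)$ to rewrite $|\Im \zeta(t)| = 2\beta |a(t) - a(b)|$. The mean-value argument \eqref{atb.est}, with the choice $\delta = b^{-\nu - \eps}$, gives $|a(t) - a(b)| \ls a(b)\, b^{\nu} \delta = o(a(b)) = o(\alpha)$ as $b \to +\infty$, so by AM--GM
\begin{equation}
|\Im \zeta(t)| = 2 \beta \cdot o(\alpha) = o(\alpha \beta) = o(\alpha^2 + \beta^2).
\end{equation}
Taking square roots yields $|\Im \zeta(t)|^{\frac12} = o((\alpha^2+\beta^2)^{\frac12})$, which combined with the $\Re \zeta$ bound closes the upper inequality.

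There is no real obstacle; the lemma is a bookkeeping corollary of the preceding one, the key observation being that $\Im \zeta$ is the subdominant contribution. The only minor technical point is that the estimate \eqref{atb.est} must be used on both halves of the cut-off support: for $t \in (b-\delta, b)$ one applies the mean value theorem with an intermediate point $\eta \in (t, b)$, and the rest of the argument carries over verbatim, as already noted at the start of the proof of Lemma~\ref{Lem:Re.zeta}.
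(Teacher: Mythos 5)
Your proposal is correct and follows essentially the same route as the paper: the real part is handled by citing Lemma~\ref{Lem:Re.zeta}, and the imaginary part is shown to be subdominant via $|\Im\zeta(t)|=2\beta|a(b)-a(t)|$ together with the mean-value estimate \eqref{atb.est}, giving $|\Im\zeta(t)|=o(\alpha^2+\beta^2)$. Your explicit AM--GM step $o(\alpha\beta)=o(\alpha^2+\beta^2)$ merely spells out what the paper leaves implicit.
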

\begin{proof}
We give a detailed proof for the case $t> b$ only,  the case $t< b $ is very similar. 
By Lemma \ref{Lem:Re.zeta} we already have 
\begin{equation}
(\Re \zeta (t))^{\frac{1}{2}}\approx \alpha+\beta, \quad b \to + \infty.
\end{equation}

We proceed with the estimate of $|\Im \zeta(t)|^{\frac{1}{2}}$, see \eqref{Re.Im.zeta}. From \eqref{atb.est} and \eqref{b.def}, we obtain (recall that we consider only $\beta>0$ here)
\begin{equation}
|\Im\zeta (t)|=2\beta|a(b)-a(t)| = o(\alpha^2+\beta^2), 
\quad b \to +\infty. 
\end{equation}
Hence,
\begin{equation}
(\Re \zeta(t))^{\frac{1}{2}}+|\Im\zeta(t)|^{\frac{1}{2}}
\approx
(\alpha+\beta)(1+ o(\alpha + \beta))
\approx \alpha+\beta, \quad b \to +\infty.
\end{equation}
\end{proof}
The next step is the two-sided estimate of $\Re(\ii \la\psi_{-1})$.
\begin{lemma}\label{Lem:Re.psi.-1.est}
Let the assumptions of Lemma~\ref{Lem:Re.zeta} hold. Then there exist two positive constants $c_1, c_2 >0$ such that for all $s\in(b-\delta,b+\delta)$
\begin{equation}\label{Re.psi-1.est}
c_2\dfrac{\beta}{\alpha+\beta}a'(b)(s-b)^2\leq \Re(\ii\la\psi_{-1}(s))\leq c_1\dfrac{\beta}{\alpha+\beta}a'(b)(s-b)^2, \quad b \to +\infty.
\end{equation}
\end{lemma}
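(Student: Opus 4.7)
The plan is to compute $\Re(\ii\la\psi_{-1}(s))$ explicitly in terms of an integral of $\sqrt{(|\zeta|-\Re\zeta)/2}$ and then bound the integrand uniformly on $(b-\delta,b+\delta)$. Applying the square root formula \eqref{square-root} to $\zeta(t)$ gives
\begin{equation}
\Re\bigl(\ii\sqrt{\zeta(t)}\bigr) = -\Im\sqrt{\zeta(t)} = -\sgn(\Im\zeta(t))\,\sqrt{\tfrac{|\zeta(t)|-\Re\zeta(t)}{2}}.
\end{equation}
Since $a$ is increasing near $b$ by \eqref{asm:bas.a.incr} and $\alpha=a(b)$, formula \eqref{Re.Im.zeta} yields $\Im\zeta(t)<0$ for $t>b$ and $\Im\zeta(t)>0$ for $t<b$, so that in either case
\begin{equation}
\Re(\ii\la\psi_{-1}(s)) = \int_b^s \Re\bigl(\ii\sqrt{\zeta(t)}\bigr)\,\dd t = \int_{\min(s,b)}^{\max(s,b)} \sqrt{\tfrac{|\zeta(t)|-\Re\zeta(t)}{2}}\,\dd t \geq 0.
\end{equation}

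The next step is to estimate the integrand using the elementary identity $|\zeta|-\Re\zeta = (\Im\zeta)^2/(|\zeta|+\Re\zeta)$, which gives
\begin{equation}
\sqrt{\tfrac{|\zeta(t)|-\Re\zeta(t)}{2}} = \frac{|\Im\zeta(t)|}{\sqrt{2(|\zeta(t)|+\Re\zeta(t))}}.
\end{equation}
By Lemma~\ref{Lem:Re.zeta} we have $\Re\zeta(t)\approx \alpha^2+\beta^2$, and Lemma~\ref{Lem:zeta.est} together with $|\zeta|\leq \Re\zeta + |\Im\zeta|$ yield $|\zeta(t)|\lesssim (\alpha+\beta)^2$. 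Hence $|\zeta(t)|+\Re\zeta(t)\approx(\alpha+\beta)^2$ and
\begin{equation}
\sqrt{\tfrac{|\zeta(t)|-\Re\zeta(t)}{2}} \approx \frac{|\Im\zeta(t)|}{\alpha+\beta} = \frac{2\beta\,|a(t)-a(b)|}{\alpha+\beta}.
\end{equation}

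Finally, the mean value theorem together with the almost-constant behavior \eqref{a.const} applied to $a'$ gives $|a(t)-a(b)| = a'(\eta)|t-b|\approx a'(b)|t-b|$ uniformly for $t\in(b-\delta,b+\delta)$, since the choice $\delta=b^{-\nu-\eps}=o(b^{-\nu})$ ensures that $a'(\eta)\approx a'(b)$ on the relevant interval. Substituting and integrating,
\begin{equation}
\Re(\ii\la\psi_{-1}(s))\approx \frac{\beta\,a'(b)}{\alpha+\beta}\int_{\min(s,b)}^{\max(s,b)}|t-b|\,\dd t = \frac{\beta\,a'(b)}{2(\alpha+\beta)}(s-b)^2,
\end{equation}
which yields the desired two-sided bound. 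The main technical point to be careful about is the uniform comparison $a'(\eta)\approx a'(b)$ for $\eta$ in the $\delta$-neighborhood of $b$; this is precisely what the regularity assumption \eqref{asm:bas.a'} guarantees via \eqref{a.const} under our choice $\delta=b^{-\nu-\eps}$, so no additional obstacle arises. The symmetric treatment of $s<b$ and $s>b$ is handled uniformly once the sign of $\Im\zeta$ is correctly tracked.
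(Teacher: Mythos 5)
Your proposal is correct and follows essentially the same route as the paper: the identity $|\zeta|-\Re\zeta=(\Im\zeta)^2/(|\zeta|+\Re\zeta)$, the sign analysis of $\Im\zeta$ via monotonicity of $a$, the two-sided bound $|\zeta|+\Re\zeta\approx(\alpha+\beta)^2$ from Lemmas~\ref{Lem:Re.zeta} and \ref{Lem:zeta.est}, and the mean value theorem combined with \eqref{a.const} to replace $|a(t)-a(b)|$ by $a'(b)|t-b|$. The only cosmetic difference is that you treat $s<b$ and $s>b$ uniformly via the $\min/\max$ limits, whereas the paper writes out only $s>b$ and declares the other case analogous.
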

\begin{proof}
Rewriting $\Re(\ii\la\psi_{-1})(s)$ and using \eqref{square-root}, we obtain
\begin{equation}
\begin{aligned}
\Re(\ii\la\psi_{-1})(s)
&=
\int_{b}^{s}\Re(\ii\sqrt{\zeta(t)}) \, \dd t
=
-\int_{b}^{s}\Im \sqrt{\zeta(t)} \, \dd t
\\
&=
-\int_{b}^{s} \sgn(\Im\zeta(t)) \sqrt{\dfrac{|\zeta(t)|-\Re \zeta(t)}{2}}\, \dd t.
\end{aligned}
\end{equation}
Taking into account \eqref{asm:bas.a.incr} and \eqref{Re.Im.zeta}, we observe that (recall that $\beta>0$)
\begin{equation}
\sgn(\Im\zeta(t))=\sgn(2\beta (a(b)-a(t))) = \sgn(b-t).
\end{equation}

Further we consider the case $s>b$ only, the other one is fully analogous. By straightforward manipulations and estimates, we obtain (recall that $\Re \zeta(t)>0$) 
\begin{equation}
\begin{aligned}
\Re(\ii\la\psi_{-1})(s) 
&\approx
\int_{b}^{s}\sqrt{\dfrac{|\zeta(t)|^2-(\Re \zeta(t))^2}{|\zeta(t)|+\Re \zeta(t)}} \, \dd t
=
\int_{b}^{s}\dfrac{|\Im \zeta(t)|}{\sqrt{|\zeta(t)|+\Re \zeta(t)}} \, \dd t
\\
&
\approx  \int_{b}^{s}\dfrac{|\Im \zeta(t)|}{(\Re\zeta(t))^{\frac{1}{2}}+|\Im \zeta(t)|^\frac{1}{2}} \, \dd t, \quad s>b.
\end{aligned}
\end{equation}
By \eqref{Re.Im.zeta}, Lemma \ref{Lem:zeta.est}, the mean value theorem and \eqref{a.const}, we get ($\beta$, $a'(t) > 0$)
\begin{equation}
\begin{aligned}
\Re(\ii \la\psi_{-1})(s) & 
\approx \int_{b}^{s}\dfrac{|\beta(a(b)-a(t))|}{\alpha+\beta} \, \dd t\approx \int_{b}^{s}\dfrac{|\beta a'(b)(b-t)|}{\alpha+\beta} \, \dd t
\\
&
\approx \dfrac{\beta}{\alpha+\beta}a'(b)\int_{b}^{s}(t-b) \, \dd t
\approx \dfrac{\beta}{\alpha+\beta}a'(b)(s-b)^2,
\end{aligned}
\end{equation}
hence \eqref{Re.psi-1.est} holds for $s>b$.
\end{proof}

The next step is to estimate $\| \xi g \|^2$ from below and $ \| \xi' g' \|^2$, $\| \xi'' g \|^2$ from above. 
\begin{lemma}\label{Lem:bas.xi.xi'g}
Let the assumptions of Lemma~\ref{Lem:Re.zeta} hold, let $\xi$, $g$ be as in \eqref{xi.def},  \eqref{f.la.bas}, respectively, and let $c_2>0$ be as in Lemma~\ref{Lem:Re.psi.-1.est}. Then, as $b \to +\infty$,  
\begin{align}\label{xig eq}
\| \xi g\|^2 &\gs \delta 
\exp \left(
- \frac{c_2}{4} \frac{\beta}{\alpha+\beta} a'(b) \delta^2
\right),
\\
\label{xi'g' eq}
\| \xi' g'\|^2 &\ls (\alpha^2+\beta^2)\delta^{-1} \exp\left({-\frac{c_2}{2} \frac{\beta}{\alpha+\beta}a'(b)\delta^2}\right),
\\
\label{xi''g eq}
\| \xi'' g\|^2&\ls \delta^{-3} \exp\left({-\frac{c_2}{2}\frac{\beta}{\alpha+\beta}a'(b)\delta^2}\right).
\end{align} 
\end{lemma}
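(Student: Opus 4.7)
The plan is to reduce all three estimates to pointwise bounds on $|g|^2$ via Lemma~\ref{Lem:Re.psi.-1.est}, exploiting the identity $|g(s)|^2=\exp(-2\Re(\ii\la\psi_{-1}(s)))$. From Lemma~\ref{Lem:Re.psi.-1.est} applied pointwise we then have
\begin{equation}
\exp\!\Bigl(-2c_1\dfrac{\beta}{\alpha+\beta}a'(b)(s-b)^2\Bigr)
\leq |g(s)|^2 \leq
\exp\!\Bigl(-2c_2\dfrac{\beta}{\alpha+\beta}a'(b)(s-b)^2\Bigr),
\end{equation}
valid on $\supp\xi$. The upper bound will drive the estimates for $\|\xi'g'\|^2$ and $\|\xi''g\|^2$, while the lower bound handles $\|\xi g\|^2$.

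For the two upper bounds I would first localize to $\supp\xi^{(j)}\subset[b-\delta,b-\delta/2]\cup[b+\delta/2,b+\delta]$ (see \eqref{xi'.supp}), where $(s-b)^2\geq\delta^2/4$, so
\begin{equation}
|g(s)|^2\leq \exp\!\Bigl(-\tfrac{c_2}{2}\dfrac{\beta}{\alpha+\beta}a'(b)\delta^2\Bigr).
\end{equation}
Combining this with $\|\xi^{(j)}\|_\infty\ls\delta^{-j}$ from \eqref{xi'.est} and the fact that the support has length $\ls\delta$ yields \eqref{xi''g eq} at once. For \eqref{xi'g' eq} I would use $g'=-\ii\la\psi_{-1}'g$ together with $\la^2(\psi_{-1}')^2=\zeta$, see \eqref{zeta.def}, so that $|g'|^2=|\zeta||g|^2$; the bound $|\zeta|\ls\alpha^2+\beta^2$ needed to close this estimate follows from Lemma~\ref{Lem:zeta.est} via $|\zeta|^{1/2}\leq (\Re\zeta)^{1/2}+|\Im\zeta|^{1/2}\approx\alpha+\beta$.

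For the lower bound \eqref{xig eq} I would restrict the integration to a sub-interval $(b-\tau,b+\tau)$ on which $\xi\equiv1$, using the pointwise lower bound on $|g|^2$ with the constant $c_1$. The constant bookkeeping is the only subtle point: to match the exponent $-\tfrac{c_2}{4}\tfrac{\beta}{\alpha+\beta}a'(b)\delta^2$ stated in the lemma (with $c_2$ the constant of Lemma~\ref{Lem:Re.psi.-1.est}, which is in general smaller than $c_1$), I would choose
\begin{equation}
\tau:=\delta\sqrt{\frac{c_2}{8c_1}},
\end{equation}
which satisfies $\tau\leq\delta/2$ since $c_2\leq c_1$, so that $\xi\equiv 1$ on $(b-\tau,b+\tau)$. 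Then
\begin{equation}
\|\xi g\|^2\geq \int_{b-\tau}^{b+\tau}\exp\!\Bigl(-2c_1\dfrac{\beta}{\alpha+\beta}a'(b)(s-b)^2\Bigr)\dd s
\geq 2\tau\exp\!\Bigl(-\tfrac{c_2}{4}\dfrac{\beta}{\alpha+\beta}a'(b)\delta^2\Bigr),
\end{equation}
and $2\tau\approx\delta$ gives \eqref{xig eq}.

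The main (and essentially only) obstacle is the constant-tracking in the lower bound: one must shrink the interval of integration by a factor depending on $c_1/c_2$ in order to convert the $c_1$ in the pointwise lower bound on $|g|^2$ into the smaller constant $c_2/4$ that appears in the statement (and that is, in turn, strictly smaller than the $c_2/2$ of the two upper bounds). This margin is precisely what makes the ratios $\|\xi^{(j)}g^{(k)}\|/\|\xi g\|$ exponentially small under \eqref{Thm:bas.asm.cutoff}, and so getting the constants in this exact relationship is essential for the rest of the argument, not just cosmetic.
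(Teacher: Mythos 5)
Your proposal is correct and follows essentially the same route as the paper: pointwise two-sided bounds on $|g|^2$ from Lemma~\ref{Lem:Re.psi.-1.est}, localization to $\supp\xi^{(j)}$ for the upper bounds with the identity $|g'|^2=|\zeta||g|^2$ and Lemma~\ref{Lem:zeta.est}, and a shrunken interval for the lower bound whose half-width $\delta\sqrt{c_2/(8c_1)}$ coincides exactly with the paper's choice $\delta/(2k)$, $k^2=2c_1/c_2$. The constant bookkeeping you flag is indeed the only delicate point, and you have it right.
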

\begin{proof}
We start with the estimate of $\| \xi g \|^2$.
Using the definition of $\xi$, see \eqref{xi.def}, 
\begin{equation}
\| \xi g \|^2 = \int_{b-\delta}^{b+\delta}\big| \xi(s)\big|^2\big|e^{-\ii\la\psi_{-1}(s)} \big|^2 \, \dd s
\geq 
\int_{b-\frac{\delta}{2}}^{b+\frac{\delta}{2}}e^{-2\Re(\ii\la\psi_{-1}(s))} \, \dd s.
\end{equation}
So with the upper bound of $\Re(\ii \la\psi_{-1}(s))$ in Lemma \ref{Lem:Re.psi.-1.est}, we get
\begin{equation}
\| \xi g \|^2 \geq 
\int_{b-\frac{\delta}{2}}^{b+\frac{\delta}{2}} e^{-2c_1\frac{\beta}{\alpha+\beta} a'(b)(s-b)^2} \, \dd s=\int_{-\frac{\delta}{2}}^{\frac{\delta}{2}} e^{-2 c_1\frac{\beta}{\alpha+\beta} a'(b)s^2} \, \dd s.
\end{equation}
Since $c_1 \geq c_2$, taking a positive $k$ such that $k^2:=2c_1 /c_2 >1$, we arrive at
\begin{equation}
\| \xi g \|^2
\geq \int_{-\frac{\delta}{2k}}^{\frac{\delta}{2k}} e^{-2 c_1\frac{\beta}{\alpha+\beta}a'(b)s^2} \, \dd s 
\gs \delta e^{-2 c_1\frac{\beta}{\alpha+\beta}a'(b)\frac{\delta^2}{4k^2}}
= \delta e^{- \frac{c_2}{4} \frac{\beta}{\alpha+\beta}a'(b) \delta^2},
\end{equation}
thus \eqref{xig eq} is proved.

Next we analyze $\|\xi'' g\|^2$. The estimate \eqref{xi'.est} and the lower bound for $\Re(\ii \la\psi_{-1}(s))$ in \eqref{Re.psi-1.est} lead to
\begin{equation}
\| \xi'' g\|^2
= \int_{\mathbb{\supp(\xi'')}}^{}\big| \xi''(s)\big|^2 e^{-2\Re(\ii \la\psi_{-1}(s))}  \, \dd s
\ls\delta^{-4}\int_{\mathbb{\supp(\xi'')}}^{} e^{-2c_2\frac{\beta}{\alpha+\beta}a'(b)(s-b)^2}  \, \dd s.
\end{equation}
By symmetry and \eqref{xi'.supp}, we get
\begin{equation}
\| \xi'' g\|^2
\ls 
\delta^{-4}\int_{\delta/2}^{\delta} e^{-2 c_2\frac{\beta}{\alpha+\beta}a'(b)s^2}  \, \dd s
\ls
\delta^{-3} e^{-\frac{c_2}{2} \frac{\beta}{\alpha+\beta}a'(b)\delta^2},
\end{equation}
so \eqref{xi''g eq} is proved.

Finally, to estimate $\| \xi' g'\|^2 $, we use \eqref{zeta.def}, \eqref{|zeta|.est} and obtain
\begin{equation}
\| \xi' g'\|^2 
\approx 
\int_{ \R } |\xi'(s)|^2 |\la\psi_{-1}'(s)|^2 |e^{-\ii\la\psi_{-1}(s)} |^2 \, \dd s
\ls \frac{\alpha^2 + \beta^2}{\delta^2} \int_{\supp (\xi')}  |e^{-\ii\la\psi_{-1}(s)} |^2 \, \dd s,
\end{equation}
which can be continued as the estimate of $\| \xi'' g\|^2$.
\end{proof}

It remains to estimate the remainder $R$, see \eqref{R.bas.def}. 

\begin{lemma}\label{Lem:bas.R}
Let the assumptions of Lemma~\ref{Lem:Re.zeta} hold and let $R$ be as in \eqref{R.bas.def}.
Then
\begin{equation}\label{R.bas.est}
\| R\|_{\infty,(b-\delta,b+\delta)}^{2}
\ls \alpha^2 b^{2\nu} + \frac{(q^{(1)}_b)^2}{\alpha^2+\beta^2}, \quad b \to +\infty.
\end{equation}
\end{lemma}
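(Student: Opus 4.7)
The plan is straightforward calculation using the formulas established for $\zeta$ and $\psi_{-1}$, together with the two-sided estimate of $|\zeta|^{1/2}$ from Lemma~\ref{Lem:zeta.est}.

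First I would differentiate. Recall from \eqref{psi-1.bas} and \eqref{zeta.def} that $\la \psi_{-1}'(x) = \sqrt{\zeta(x)}$ with $\zeta(x) = -\la^2 - 2\la a(x) - q(x)$. Differentiating once more gives
\begin{equation}
\la \psi_{-1}''(x) = \frac{\zeta'(x)}{2 \sqrt{\zeta(x)}} = \frac{-2\la a'(x) - q'(x)}{2 \sqrt{\zeta(x)}},
\end{equation}
and hence
\begin{equation}
|\la \psi_{-1}''(x)| \leq \frac{|\la|\, |a'(x)| + \tfrac{1}{2}|q'(x)|}{|\zeta(x)|^{1/2}}, \qquad x \in \supp \xi.
\end{equation}

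Next I would estimate numerator and denominator separately on $(b-\delta,b+\delta)$. For the denominator, Lemma~\ref{Lem:zeta.est} gives $|\zeta(x)|^{1/2} \approx (\Re\zeta(x))^{1/2} + |\Im\zeta(x)|^{1/2} \approx \alpha + \beta$ as $b \to +\infty$. For the numerator, $|\la| = (\alpha^2+\beta^2)^{1/2} \approx \alpha+\beta$; the assumption \eqref{asm:bas.a'} combined with \eqref{a.const} yields $|a'(x)| \ls x^\nu a(x) \approx b^\nu a(b) = b^\nu \alpha$ uniformly for $x \in (b-\delta,b+\delta)$ as $b \to +\infty$; and by definition $|q'(x)| \leq q_b^{(1)}$ on that interval. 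Inserting these,
\begin{equation}
|\la \psi_{-1}''(x)| \ls \frac{(\alpha+\beta)\, \alpha\, b^\nu + q_b^{(1)}}{\alpha+\beta} \ls \alpha\, b^\nu + \frac{q_b^{(1)}}{\alpha+\beta}.
\end{equation}

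Finally, since $|R(x)| = |\xi(x)\, \la \psi_{-1}''(x)| \leq |\la \psi_{-1}''(x)|$ on $\supp \xi \subset (b-\delta,b+\delta)$, squaring the previous bound and using $(\alpha+\beta)^2 \approx \alpha^2+\beta^2$ gives the claim
\begin{equation}
\|R\|_{\infty,(b-\delta,b+\delta)}^2 \ls \alpha^2 b^{2\nu} + \frac{(q_b^{(1)})^2}{\alpha^2+\beta^2}, \qquad b \to +\infty.
\end{equation}

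There is no real obstacle here: all non-trivial work has been absorbed into Lemmas~\ref{Lem:Re.zeta} and \ref{Lem:zeta.est}. The only point demanding a little care is making sure the estimate $|a'(x)| \ls b^\nu \alpha$ holds uniformly across the whole interval $(b-\delta,b+\delta)$ rather than merely pointwise at $b$, which is exactly what the \emph{almost-constant} property \eqref{a.const} (a consequence of the choice $\delta = b^{-\nu-\eps}$) guarantees.
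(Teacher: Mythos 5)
Your proof is correct and follows essentially the same route as the paper: differentiate $\la\psi_{-1}'=\sqrt{\zeta}$ to get $\la\psi_{-1}''=\zeta'/(2\sqrt{\zeta})$, use Lemma~\ref{Lem:zeta.est} for the denominator, and bound $a'$ and $q'$ on $(b-\delta,b+\delta)$ via \eqref{asm:bas.a'}, \eqref{a.const} and \eqref{q.j.def.1}. Your explicit remark about the uniformity of $|a'(x)|\ls b^\nu\alpha$ over the whole interval is a point the paper leaves implicit, but there is no substantive difference.
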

\begin{proof}
From \eqref{zeta.def} and \eqref{|zeta|.est}, we have for all $t\in(b-\delta,b+\delta)$ that
\begin{equation}
|\la\psi_{-1}''(t)|=\left|\dfrac{-2\la a'(t)-q'(t)}{2(-\la^2-2\la a(t)-q(t))^{\frac{1}{2}}}\right|
\approx
\dfrac{|\la| \big|2a'(t)+\frac{q'(t)}{\la}\big|}{|\zeta|^{\frac{1}{2}}}
\ls a'(t)+\frac{|q'(t)|}{\alpha+\beta}.
\end{equation}
Since
\begin{equation}
\| R\|_{\infty,(b-\delta,b+\delta)}^{2}
\leq 
\|\la\psi_{-1}''\|_{\infty,(b-\delta,b+\delta)}^{2},
\end{equation}  
the claim follows from \eqref{asm:bas.a'} and \eqref{q.j.def.1}. 
\end{proof}

\subsection{The proof of Theorem \ref{Thm:bas}} 
\label{subsec:Thm.bas.proof}
Equipped with Lemmas~\ref{Lem:bas.xi.xi'g} and \ref{Lem:bas.R}, we are in position to prove the main result of this section.  

\begin{proof}[Proof of Theorem~\ref{Thm:bas}]
Using \eqref{T.f.la}, the triangle inequality and the definition of the remainder $R$, see 
\eqref{R.bas.def}, we have
\begin{equation}
\frac{\| T(\la)\xi g\|^2}{\|(\xi g)' \|^2 +\|q^\frac12 \xi g \|^2 + |\la|^2\|\xi g\|^2}
\ls 
\underbrace{\dfrac{\|\xi''g\|^2}{|\la|^2\|\xi g\|^2}}_{=:Q_1}+\underbrace{\dfrac{\|\xi'g'\|^2}{|\la|^2\|\xi g\|^2}}_{=:Q_2}+\underbrace{\dfrac{\| R\|_{\infty,(b-\delta,b+\delta)}^{2}\|\xi g\|^2}{|\la|^2\|\xi g\|^2}}_{=:Q_3},
\end{equation}
where we kept only $|\la|^2\|\xi g\|^2$ in the denominator in the last step; $q$ might be $0$ and it can be showed that the term $\|(\xi g)'\|^2$ does not improve the estimate in general.

We start with $Q_1$. By \eqref{xig eq} and \eqref{xi''g eq}, see Lemma~\ref{Lem:bas.xi.xi'g}, we arrive at 
\begin{equation}
\begin{aligned}
Q_1&
\ls
\dfrac{\delta^{-3} e^{-\frac{c_2}{2} \frac{\beta}{\alpha+\beta} a'(b) \delta^2}} {(\alpha^2+\beta^2) \delta  e^{-\frac{c_2}{4} \frac{\beta}{\alpha+\beta}a'(b) \delta^2}}
=
\dfrac{1}{(\alpha^2+\beta^2)\delta^4}e^{- \frac{c_2}{4} \frac{\beta}{\alpha+\beta}a'(b)\delta^2}, \quad b \to + \infty.
\end{aligned}
\end{equation}
Hence the assumption \eqref{Thm:bas.asm.cutoff} yields that $Q_1=o(1)$ as $b \to + \infty$.

The estimate of $Q_2$ is similar. From \eqref{xig eq} and \eqref{xi'g' eq}, we get
\begin{equation}
Q_2
\ls
\dfrac{(\alpha^2+\beta^2)\delta^{-1} e^{-\frac{c_2}{2}\frac{\beta}{\alpha+\beta}a'(b)\delta^2}} {(\alpha^2+\beta^2) \delta e^{-\frac{c_2}{4}\frac{\beta}{\alpha+\beta}a'(b)\delta^2}}
=
\dfrac{1}{\delta^2}e^{-\frac{c_2}{4} \frac{\beta}{\alpha+\beta}a'(b)\delta^2}, \quad b \to +\infty,
\end{equation}
hence the assumption \eqref{Thm:bas.asm.cutoff} yields again that $Q_2 = o(1)$ as $b \to + \infty$.

Finally, we estimate $Q_3$ using \eqref{R.bas.est} and the assumption~\eqref{Thm:bas.asm.aq}, namely
\begin{equation}
Q_3
\ls
\dfrac{\alpha^2 b^{2\nu}}{\alpha^2+\beta^2}
+ 
\dfrac{(q^{(1)}_b)^2}{\alpha^4+\beta^4} = o(1), 
\quad b \to +\infty.
\end{equation}
Hence, putting all the estimate above and \eqref{G.T.Psi} together, we get \eqref{Thm:bas.claim}.
\end{proof}

\subsection{Examples}
\label{subsec:ex1}

\begin{example}\label{Ex:pol.1}(Polynomial-like dampings and potentials) 
	
First we consider dampings $a \in C^2(\R)$ and potentials $q \in C^1(\R)$ satisfying Assumption~\ref{Asm:bas} with $\nu=-1$ and 
\begin{equation}\label{a.q.pol.ex.1}
\forall x \gs 1, \quad a(x)=x^p, \quad q(x) \ls x^r, \quad |q'(x)| \ls x^{r-1} 
\quad p,r\in\mathbb{R_{+}}.
\end{equation}
We determine $b$ when $b \to +\infty$ from the equation $\alpha=a(b)$, see \eqref{b.def}, namely,
\begin{equation}
b=\alpha^{\frac{1}{p}}, \quad b \to + \infty.
\end{equation}
For a sufficiently small $\eps>0$, we take $\delta=b^{1-\eps}$ and start to check the conditions in Theorem \ref{Thm:bas}. To this end, we observe that as $b \to + \infty$, we have
\begin{equation}\label{evaluations poly}
a(b) = b^p, \quad q_b^{(j)}  \ls b^{r-j}, \quad j=0,1. 
\end{equation}
The condition \eqref{Thm:bas.asm.cutoff} guaranteeing the successful cut-off is clearly satisfied independently of the choice of $\beta(b)$ since $\delta \to \infty$ as $b \to + \infty$. To satisfy the remaining conditions~\eqref{Thm:bas.asm.q0} and \eqref{Thm:bas.asm.aq}, we impose the following restrictions on $\beta(b)$
\begin{equation}\label{Ex.pol.beta.1}
\begin{aligned}
&\text{if} \quad r\geq 2p, \qquad \beta(b) \gs b^{s}, \quad s>\frac{r}{2},
\\&\text{if} \quad r< 2p, \qquad \beta(b)>0.
\end{aligned}
\end{equation} 
Recalling \eqref{evaluations poly} and our choice of $\beta$, we indeed have
\begin{equation}
\frac{\alpha^2}{b^2} + q^{(0)}_b  +q^{(1)}_b \ls b^{2p-2} +b^r + b^{r-1}=o(\alpha^2+\beta^2), \quad b \to +\infty.
\end{equation} 

In summary, with the choice of $\beta=\beta(b)$ in \eqref{Ex.pol.beta.1}, the statement of Theorem~\ref{Thm:bas} holds.
\end{example}

\begin{example}\label{Ex:exp.1}(Exponential dampings and potentials) 
	
Next we consider dampings $a \in C^2(\R)$ and potentials $q \in C^1(\R)$ satisfying 
\begin{equation}\label{a.q.exp.ex.1}
\forall x \gs 1, \quad a(x)=e^{x^p}, \quad q(x) \ls  e^{x^r}, \quad |q'(x)| \ls x^{r-1} e^{x^r}, \quad p,r\in\R_{+}, 
\end{equation}
thus Assumption~\ref{Asm:bas} holds with $\nu=p-1$. We further suppose that
\begin{equation}\label{Ex.exp.1.rp}
\quad r \leq p.
\end{equation}
From \eqref{b.def}, we have 
\begin{equation}
b= (\ln\alpha)^{\frac{1}{p}}, \quad b \to + \infty.
\end{equation}
With a sufficiently small $\eps>0$, we take $\delta=b^{-(p-1)-\eps}$ and obtain that as $b \to + \infty$
\begin{equation}
\label{evaluations exponential}
a(b)=e^{b^p},  
\quad
a'(b) \approx b^{p-1}e^{b^p},
\quad 
q_b^{(j)} \ls e^{(1+o(1))b^r}, \quad j =0,1. 
\end{equation}
With regard to the conditions \eqref{Thm:bas.asm.q0} and \eqref{Thm:bas.asm.aq}, it follows from \eqref{Ex.exp.1.rp} that 
\begin{equation}
q_b^{(0)} + q_b^{(1)} \ls e^{(1+o(1))b^r} = o(\alpha^2), \quad b \to + \infty,   
\end{equation}
thus no restrictions on $\beta$ are imposed. On the other hand, the first term in \eqref{Thm:bas.asm.aq} behaves as
\begin{equation}
a(b)^2 b^{2\nu} = b^{2(p-1)}e^{2b^p}, \quad b \to + \infty, 
\end{equation}
thus we obtain the following restrictions on $\beta$
\begin{equation}\label{Ex.exp.beta.1}
\begin{aligned}
&\text{if} \quad p \geq 1, \qquad \beta(b) \gs b^s e^{b^p}, \quad s> p-1,
\\&\text{if} \quad p<1, \qquad \beta(b)>0.
\end{aligned}
\end{equation} 

Finally, we can verify that with this choice of $\beta$, also the condition \eqref{Thm:bas.asm.cutoff} is satisfied. Indeed, for every $c>0$, we have
\begin{equation}\label{beide 1 exp}
\left(
 b^{2(p-1)+2\eps}+\frac{b^{4(p-1)+4\eps}}{\alpha^2+\beta^2}\right)
 e^{-c\frac{\beta}{\alpha+\beta}a'(b)b^{-2(p-1)-2\eps}} = o(1), \quad b \to + \infty
\end{equation}
since, in the non-obvious case $p \geq 1$, we have at least exponential decay due to
\begin{equation}\label{check example exponential}
\frac{\beta}{\alpha+\beta}a'(b)b^{-2(p-1)-2\eps}
\approx 
b^{-p+1-2\eps} e^{b^p}, \quad b \to + \infty.
\end{equation}

In summary, with $\beta=\beta(b)$ as in \eqref{Ex.exp.beta.1}, the statement of Theorem~\ref{Thm:bas} holds.

\end{example}

\begin{example}\label{Ex:log.1}(Logarithmic dampings and potentials)
	 
Finally, we consider dampings and potentials $a \in C^2(\R)$ and $q\in C^1(\R)$ satisfying
\begin{equation}\label{a und q in logarithmus beispiel}
\forall x \gs 1, \quad a(x)=\ln(x), \quad q(x) + |q'(x)| \ls \ln(x),
\end{equation}
which satisfy Assumption \ref{Asm:bas} with $\nu=-1$. From \eqref{b.def}, we immediately have that
\begin{equation}
b=e^{\alpha}, \quad b \to + \infty
\end{equation}
and for a sufficiently small $\eps>0$, we take $\delta=b^{1-\eps}$. It follows that, as $b \to +\infty$,
\begin{equation}\label{evaluations logarithm}
a(b)=\ln(b), \quad a'(b)=\frac{1}{b}, \quad q_b^{(j)} \ls \ln(b), \quad j=0,1. 
\end{equation} 
Hence the condition \eqref{Thm:bas.asm.aq} holds without any restrictions on $\beta>0$ as
\begin{equation}
\frac{a(b)^2}{b^2} + q_b^{(0)} + q_b^{(1)} \ls \ln (b) = o(\alpha^2), \quad b \to + \infty.
\end{equation}
Since $\delta \to +\infty$ as $b \to + \infty$, the condition \eqref{Thm:bas.asm.cutoff} holds also without any restrictions on $\beta$. 

In summary, the statement of Theorem \ref{Thm:bas} holds with any choice of $\beta(b)>0$.
\end{example}

\section{Expansion of the phase}
\label{sec:exp}

In Section~\ref{sec:basic}, only the simplest form of the pseudomode was used. In detail, the function $g$ in \eqref{f.la.bas} has the form
\begin{equation}
g=\exp\left( -\ii\la\psi_{-1} \right).
\end{equation} 
If $a$ and $q$ are more regular, more terms in the exponent can be considered, namely we employ a general WKB expansion
\begin{equation}\label{g.exp.def}
g:=\exp \left(-\ii\la\psi_{-1}-\sum_{k=0}^{n-1}\la^{-k}\psi_k\right);
\end{equation}
the functions $\psi_k$ are determined by a standard procedure briefly summarized in Section~\ref{subsec:exp.idea} below.

The basic ansatz in Section \ref{sec:basic} works already for several important examples, see Section~\ref{subsec:ex1}. Nonetheless, by taking more terms in $g$, we obtain faster decay rates in the main statement, see \eqref{Thm:bas.claim}, which we make quantitative this time, see \eqref{Thm:exp.claim} and~\eg~the obtained rates \eqref{rate.pol.exp} in the example with a polynomial damping. The expansion allows also to achieve a larger set of curves along which we have a decay in the main statement \eqref{Thm:exp.claim}. In other words, we can relax restrictions on the choice of $\beta=\beta(b)$, see examples in Sections~\ref{subsec:ex1} and \ref{subsec:ex2}.

As in the previous case, we need to employ a $\la$-dependent cut-off $\xi\in C_{0}^{\infty}( \R )$ to construct a suitable pseudomode $f_\la:=\xi g$; the choice of $\xi$ is the same as in Section~\ref{sec:basic}, see \eqref{xi.def}.

\subsection{The main result} In this section, we assume the following basic regularity and growth assumptions on the damping and potential and state the main result of the paper.

\begin{asm}\label{Asm:exp}
Suppose that the functions $a, q\in C^{n+1}(\R)$ with $n \in \N$ satisfy the following conditions:
\begin{enumerate}[(a)]
\item $a$, $q$ are non-negative for sufficiently large $x$:
\begin{equation}
\forall x\gs 1, \quad a(x)\geq0 \quad \text{and} \quad q(x) \geq 0;
\end{equation}
\item $a$ is increasing for sufficiently large $x$ and unbounded at infinity:
\begin{align}
&\forall x\gs 1, \quad a'(x) > 0, \label{asm:exp.a.incr}
\\
&\lim_{x\to+\infty}a(x)=+\infty; \label{asm:exp.a.unbd}
\end{align}
\item the derivatives of $a$ are controlled by $a$:
\begin{equation}\label{asm:exp.a'}
\exists\nu \geq -1, \quad \forall m\in [[1,n+1]], \quad \forall x \gs 1 \quad |a^{(m)}(x)| \ls  x^{m\nu}a(x).
\end{equation}

\end{enumerate}
\end{asm}

\begin{theorem}\label{Thm:exp}
Let Assumption~\ref{Asm:exp} hold and let $b\in \R _+$ be defined by $\alpha = a(b)$. Take $\eps>0$ and define 
\begin{equation}\label{Thm:exp.asm.delta}
\delta:= b^{-\nu - \eps} 
\end{equation}
and
\begin{equation}\label{q.j.def.exp}
q^{(j)}_b := \| q^{(j)}\|_{\infty,(b-\delta,b+\delta)}, \quad j \in [[0,n]].
\end{equation}

Suppose that there exists a $b$-dependent $\beta=\beta(b)>0$ such that the following conditions hold as $b \to + \infty$
\begin{align}
\label{kappa1.def}
\forall c>0, \quad
\left(\frac{1}{\delta} + \frac1{\alpha+\beta} \frac1{\delta^2}\right)\exp \left(-c\frac{\beta}{\alpha+\beta} a'(b) \delta^2 \right)&=: \kappa_1(b,c) = o(1), 
\\
\label{Thm:exp.asm.q.0}
q_b^{(0)} &=o(\alpha^2+\beta^2),
\\
\label{Thm:exp.asm.q}
\forall j \in [[1,n]], \quad q_b^{(j)} & = \BigO\left( \alpha (\alpha+\beta)b^{j \nu} \right),
\\
\label{Thm:exp.asm.cutoff-2}
\frac{b^\nu}{\alpha+\beta} &= \BigO(1).
\end{align}
Let $\{f_\la\}$ with $\la= \la(b) = -\alpha + \ii \beta$, see \eqref{la.par}, 
be a family of functions constructed as in \eqref{f.la.bas} with $\xi$ as in \eqref{xi.def}, $g$ as in \eqref{g.exp.def} and the choice of $\alpha$, $\beta$ and $\delta$ as above.

Then there exists a positive $C>0$ such that for $\Psi_\la = (f_\la,\la f_\la)^t$, we have
\begin{equation}\label{Thm:exp.claim}
\frac{\|(G-\la)\Psi_\la\|_{\cH}}{\|\Psi_\la\|_{\cH}} 
\ls \kappa_1(b,C)+\kappa_2(b), \quad b \to + \infty,
\end{equation}
where
\begin{equation}\label{kappa2.def}
\kappa_2(b) := \frac{\alpha b^{\nu(n+1)}}{(\alpha+\beta)^n}
+
\sum_{k=1}^{n-1} \frac{b^{\nu(n+k+1)} \alpha^2}{(\alpha+\beta)^{n+1+k}}.
\end{equation}

\end{theorem}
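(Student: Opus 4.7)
The strategy is to adapt the argument of Theorem~\ref{Thm:bas}, with the role of the bare phase $-\ii\la\psi_{-1}$ played by the full WKB phase $\phi := -\ii\la\psi_{-1}-\sum_{k=0}^{n-1}\la^{-k}\psi_k$. Writing $g = e^{\phi}$ and plugging into $T(\la)g = -g''+(2\la a+q+\la^2)g$ gives
\begin{equation}
\frac{T(\la) g}{g}= 2\la a+q+\la^2-\phi''-(\phi')^2.
\end{equation}
Expanding $\phi'$ and $(\phi')^2$ in powers of $\la$, the coefficient of $\la^2$ vanishes precisely because $(\la\psi_{-1}')^2 = \zeta = -\la^2-2\la a -q$ (cf.~\eqref{zeta.def}), and the remaining coefficients at $\la^{1-k}$ for $k = 0,1,\dots,n-1$ produce the standard WKB transport equations
\begin{equation}\label{plan.transport}
2\ii\la\psi_{-1}'\,\psi_{k}' = \ii\la\psi_{-1}''\,\delta_{k,0}+\psi_{k-1}''-\!\!\sum_{\substack{i+j=k-1\\0\le i,j\le n-1}}\!\!\psi_i'\psi_j', \qquad k=0,1,\dots,n-1,
\end{equation}
which, since $\la\psi_{-1}'=\sqrt{\zeta}$ stays bounded away from zero on $\supp\xi$ (Lemma~\ref{Lem:zeta.est}), define $\psi_0,\dots,\psi_{n-1}$ recursively by integration from $b$. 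With this choice the leftover expression $\Theta := -\phi''-(\phi')^2+2\la a+q+\la^2$ collects only the ``uncancelled'' tail, namely $\la^{-(n-1)}\psi_{n-1}''$ together with the cross terms $-\sum_{k+\ell=m,\,0\le k,\ell\le n-1}\la^{-m}\psi_k'\psi_\ell'$ for $m=n-1,\dots,2n-2$. Accordingly
\begin{equation}
T(\la)f_\la=-\xi''g-2\xi'\phi'\,g+\xi\,\Theta\,g,
\end{equation}
which is the analogue of~\eqref{T.f.la}, and as in~\eqref{G.T.Psi} it suffices to estimate $\|T(\la)f_\la\|/(|\la|\|\xi g\|)$.

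The second step is inductive size control of $\psi_k^{(j)}$ on $\supp\xi$. Differentiating $\la\psi_{-1}'=\sqrt{\zeta}$ and using \eqref{asm:exp.a'}, \eqref{Thm:exp.asm.q}, one obtains $|(\la\psi_{-1}')^{(j)}|\ls \alpha(\alpha+\beta)\,b^{j\nu}/(\alpha+\beta)=\alpha b^{j\nu}$ for $j\ge 1$, while $|\la\psi_{-1}'|\approx\alpha+\beta$ by Lemma~\ref{Lem:zeta.est}. Solving \eqref{plan.transport} for $\psi_k'$ and iterating, one proves by induction on $k$ that, on $\supp\xi$,
\begin{equation}
|\psi_k^{(j)}|\ls \left(\frac{\alpha}{\alpha+\beta}\right)^{\!k+1}\!b^{\nu(k+j+1)}\!, \qquad k\in[[0,n-1]],\ j\in[[0,2]],
\end{equation}
where the hypothesis \eqref{Thm:exp.asm.cutoff-2} ensures that the extra derivative factor $b^\nu/(\alpha+\beta)$ picked up per differentiation is harmless. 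The same kind of estimate, together with \eqref{Thm:exp.asm.cutoff-2}, then yields the pointwise bound
\begin{equation}
\|\Theta\|_{\infty,(b-\delta,b+\delta)}\ls \frac{\alpha\,b^{\nu(n+1)}}{(\alpha+\beta)^{n-1}}+\sum_{k=1}^{n-1}\frac{\alpha^{2}\,b^{\nu(n+k+1)}}{(\alpha+\beta)^{n+k}},
\end{equation}
which after division by $|\la|\approx\alpha+\beta$ is exactly $\kappa_2(b)$.

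The third step concerns the real part of the phase: one shows
\begin{equation}
\Re\phi(s) = \Re(\ii\la\psi_{-1}(s)) + o\!\left(\tfrac{\beta}{\alpha+\beta}a'(b)(s-b)^2\right),\qquad s\in\supp\xi,
\end{equation}
so that the inequalities of Lemma~\ref{Lem:Re.psi.-1.est} persist (with slightly adjusted constants $c_1,c_2$) for $\Re\phi$. As a consequence the lower bound \eqref{xig eq} and the upper bounds \eqref{xi'g' eq}, \eqref{xi''g eq} in Lemma~\ref{Lem:bas.xi.xi'g} remain valid for $g=e^{\phi}$, since the derivatives of $\phi'$ outside the leading term $-\ii\la\psi_{-1}'$ are controlled by $O(1)$ on $\supp\xi$ by \eqref{Thm:exp.asm.cutoff-2}.

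Finally, exactly as in Section~\ref{subsec:Thm.bas.proof}, one splits
\begin{equation}
\frac{\|T(\la)f_\la\|^2}{|\la|^2\|\xi g\|^2}\ls \frac{\|\xi''g\|^2}{|\la|^2\|\xi g\|^2}+\frac{\|\xi'\phi'g\|^2}{|\la|^2\|\xi g\|^2}+\frac{\|\Theta\|_{\infty,(b-\delta,b+\delta)}^2}{|\la|^2}.
\end{equation}
The first two quotients are bounded by $\kappa_1(b,c_2/4)^2$ via step three, while the third is $\kappa_2(b)^2$ via step two. Combining with \eqref{G.T.Psi} gives \eqref{Thm:exp.claim}.

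The main difficulty I expect is the bookkeeping in step two: one has to keep track of the polynomial weights $b^{\nu j}$ picked up at each differentiation and of the algebraic factors $(\alpha+\beta)^{-1}$ coming from divisions by $\la\psi_{-1}'$ in \eqref{plan.transport}, and to verify that the tail $\Theta$ produces exactly the two families of terms appearing in $\kappa_2$. The condition \eqref{Thm:exp.asm.cutoff-2} is crucial: without it, repeated differentiation of $\psi_k$ would blow up, breaking the induction.
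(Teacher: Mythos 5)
Your proposal follows essentially the same route as the paper: the paper likewise reduces everything to the WKB transport equations (its \eqref{alle psi'}, \eqref{rn.def}), proves the analogue of your step two in Lemmas~\ref{Lem:T.exp}, \ref{Lem:exp.est.1} and \ref{Lem:rn.est.exp} (yielding $\|r_n\|_\infty\ls\kappa_2$, so your extra factor $\alpha+\beta$ in $\|\Theta\|_\infty$ is just a harmless loss), shows in Lemmas~\ref{Lem:|g|.exp}--\ref{Lem:exp.xi.xi'g} that the corrections $\sum_k\la^{-k}\psi_k$ perturb the Gaussian profile only by a multiplicatively bounded factor, and then reuses the $Q_1+Q_2+Q_3$ split verbatim. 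Only minor bookkeeping slips: the sign in step three should be $\Re\phi=-\Re(\ii\la\psi_{-1})+O(1)$ uniformly (an additive $o\bigl(\tfrac{\beta}{\alpha+\beta}a'(b)(s-b)^2\bigr)$ cannot hold near $s=b$), and your induction exponent $b^{\nu(k+j+1)}$ disagrees with the base case $|\psi_0'|\ls\alpha b^{\nu}/(\alpha+\beta)$ by one power of $b^{\nu}$ — compare \eqref{la.k.psi.k.est}.
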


Analogously to the remarks below Theorem~\ref{Thm:bas}, the condition \eqref{kappa1.def} is actually too strong. It suffices to satisfy it for one sufficiently small constant $C=c_4/8>0$, which also enters $\kappa_1(b,C)$ and which can be estimated with more detailed information on $a'$. As examples show, the decay of $\kappa_1$ is typically much faster than of $\kappa_2$ which then determines the final decay rate estimate.

\begin{remark}\label{rem:reg}
It might appear that the dependence of the rates in \eqref{Thm:exp.claim} as well as the conditions on $\beta$, determining~the curves along which we have a decay in \eqref{Thm:exp.claim}, are just a limitation of the method. However, several examples and results for Schr\"odinger operators, see~\cite{Davies-1999-200,Boulton-2002-47,Pravda-Starov-2006-73,Henry-2017-7,Krejcirik-2019-51}, suggest that this dependence is fundamental and indeed reflecting the regularity of coefficients and their behavior at infinity. A more detailed discussion can be found 
in the introduction in \cite{Krejcirik-2019-51}. 
\end{remark}

\subsection{WKB expansion}
\label{subsec:exp.idea}

We follow the standard WKB procedure, for details see~\eg~\cite{Davies-1999-200} or in particular \cite[Sec.~2.4]{Krejcirik-2019-276} where only minor modifications (mainly notational, one should set $V:=2\la a +q$ and the new spectral parameter is $-\la^2$ instead of $\la$) are needed. We obtain that 
\begin{equation}\label{rn.def}
T(\la)g = \left(\sum_{k=n-1}^{2(n-1)}\la^{-k}\phi_{k+1}\right)g=:r_n g,
\end{equation}
where functions $\phi_k$ are defined as (with some $c_{\omega, \chi} \in \C$ with $|c_{\omega,\chi}|=1$)
\begin{equation}\label{phik.def}
\psi_k''-\sum_{\substack{\omega+\chi=k \\ \omega,\chi \neq-1}}^{}c_{\omega,\chi}\psi_\omega'\psi_\chi'=:\phi_{k+1},
\end{equation}
with the convention that $\psi_\omega=0$ whenever $\omega\geq n$ or $\omega\leq-2$ and $\psi_k$ satisfy
\begin{equation}\label{alle psi'}
\begin{aligned}
\psi_{-1}'&=\Bigg(\dfrac{-\la^2-2\la a-q}{\la^2}\Bigg)^{\frac{1}{2}},\\
\psi_{k+1}'&=\dfrac{1}{2\psi_{-1}'}\Bigg(\psi_k''-\sum_{\substack{\omega+\chi=k \\ \omega,\chi \neq-1}}^{}c_{\omega,\chi}\psi_\omega'\psi_\chi'\Bigg), \quad k\in[[-1,n-2]],
\end{aligned}
\end{equation}
again with the same convention for $\psi_\omega$. For the function $\psi_0'$, one gets in particular
\begin{equation}\label{psi0'.def}
\psi_0'=-\dfrac{1}{4}\dfrac{2\la a'+q'}{-\la^2-2\la a -q}.
\end{equation}

For the forthcoming estimates, it is crucial to understand the structure of the functions $\psi_k'$ and remainders $r_n$, which is the content of the following two lemmas; detailed proofs (with minor modifications in notations) are in \cite[Appendix]{Krejcirik-2019-276}.

\begin{lemma}\label{Lem:T.exp}
Let $ n \in \mathbb{N}_0 $, $ a,q \in C^{n+1}{( \R )} $ and functions $\{\psi'_k\}_{k\in [[-1,n-1]]} $ be determined by \eqref{alle psi'}. Then 
\begin{equation}\label{psik.exp.T}  
\psi_k^{(m)}=\dfrac{(-\la^2)^\frac{k}{2}}{(-\la^2-2\la a-q)^{\frac{k}{2}}}\sum_{j=0}^{k+m}\dfrac{T_j^{k+m,k+m+1-j}}{(-\la^2-2\la a-q)^j}, \quad m \in [[1,n+1-k]],
\end{equation} 
where (with some $c_{\omega} \in \mathbb{C}$)
\begin{equation} \label{Tjrs.def}
T_j^{r,s} := \sum_{\omega\in \mathcal{I}_j^{r,s}} c_{\omega}((2\la a+q)^{(1)})^{\omega_1}((2\la a+q)^{(2)})^{\omega_2}\cdot...\cdot((2\la a+q)^{(s)})^{\omega_s}, 
\end{equation} 
and
\begin{equation}\label{Ijrs.def}
\mathcal{I}_j^{r,s} := \left\{\omega \in \mathbb{N}_0^{s}: \sum_{i=1}^{s} i\omega_i=r \quad \& \quad \sum_{i=1}^{s} \omega_i=j\right\}.
\end{equation}	
\end{lemma}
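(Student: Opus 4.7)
The plan is a nested induction: primarily on $k \in [[-1, n-1]]$ using the recursion \eqref{alle psi'}, and, within each fixed $k$, a sub-induction on $m$ by differentiation of \eqref{psik.exp.T}. For brevity in what follows let me write $\rho:=-\la^2-2\la a-q$, so that $\rho^{(i)}=-(2\la a+q)^{(i)}$ for $i\geq 1$. The central algebraic observation is that the family of polynomials $\{T_j^{r,s}\}$ defined in \eqref{Tjrs.def}--\eqref{Ijrs.def} is stable under two elementary operations: the product $T_{j_1}^{r_1,s_1}\,T_{j_2}^{r_2,s_2}$ is a linear combination of $T_{j_1+j_2}^{r_1+r_2,\,\max(s_1,s_2)}$-type terms, and the derivative $\partial_x T_j^{r,s}$ is a linear combination of $T_j^{r+1,s+1}$-type terms (shifting one derivative index from $i$ to $i+1$ keeps the total degree $j$ unchanged but raises the weight by $1$). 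Both follow by direct bookkeeping of the index set $\mathcal{I}_j^{r,s}$.

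First I would treat the base case $k=-1$. For $m=1$ the set $\mathcal{I}_0^{0,1}=\{(0)\}$ renders $T_0^{0,1}$ a nonzero constant, so \eqref{psik.exp.T} reduces to $\psi_{-1}'=(\rho/(-\la^2))^{1/2}$, matching \eqref{alle psi'}. For $m\geq 2$ I would run a sub-induction on $m$: differentiating the supposed expression for $\psi_{-1}^{(m)}$ produces two kinds of new terms, one from differentiating the $\rho^{\pm 1/2-j}$ factors (yielding an extra $\rho'=-V'$ in the numerator, itself a $T_1^{1,1}$-type object), and one from internally differentiating each $T_j^{r,s}$. Both are absorbed back into the template by the two closure properties above, with the indices shifting as $r\mapsto r+1$, $s\mapsto s+1$ and, in the first case, $j\mapsto j+1$ together with a compensating $\rho$-power; the overall prefactor $(-\la^2)^{-1/2}\rho^{1/2}$ is preserved.

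For the inductive step in $k$, assume \eqref{psik.exp.T} for all $\psi_j^{(m)}$ with $j\leq k$. The recursion reads
\begin{equation*}
\psi_{k+1}'=\frac{1}{2\psi_{-1}'}\biggl(\psi_k''-\sum_{\substack{\omega+\chi=k\\\omega,\chi\geq 0}} c_{\omega,\chi}\,\psi_\omega'\psi_\chi'\biggr).
\end{equation*}
By the inductive hypothesis, $\psi_k''$ carries prefactor $(-\la^2)^{k/2}/\rho^{k/2}$, and by additivity of the exponents each product $\psi_\omega'\psi_\chi'$ also carries the same prefactor. Multiplication by $(2\psi_{-1}')^{-1}=\tfrac12(-\la^2/\rho)^{1/2}$ then produces the prefactor $(-\la^2)^{(k+1)/2}/\rho^{(k+1)/2}$ required by \eqref{psik.exp.T} at level $k+1$. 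The $T$-polynomial content is handled by the product closure property, and a direct check against the predicted indices $r=k+2$, $s=k+3-j$ confirms the formula for $\psi_{k+1}'$. For $m\geq 2$ at level $k+1$, the same sub-induction on $m$ as in the base case extends the formula to $\psi_{k+1}^{(m)}$.

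The main obstacle, and the reason the statement is so notation-heavy, is the combinatorial bookkeeping of the indices $(j,r,s)$: verifying the two closure properties, matching exponents against the predicted $(j,\,k+m,\,k+m+1-j)$ at every step, and tracking precisely where each new term lands after product-rule expansions and reindexings of the form $j\mapsto j+1$. This is elementary term by term but tedious. Once the bookkeeping is in place, both inductions reduce to collecting terms and matching prefactors; a fully worked-out proof in an essentially identical setting is given in the appendix of \cite{Krejcirik-2019-276}, which I would adapt here with only notational changes (replacing the Schr\"odinger potential $V$ there by the present $\la$-dependent $V=2\la a+q$ and the spectral parameter by $-\la^2$).
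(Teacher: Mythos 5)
Your proposal is correct and follows essentially the same route as the paper, which itself gives no independent proof of Lemma~\ref{Lem:T.exp} but defers to the appendix of \cite{Krejcirik-2019-276} with the substitutions $V:=2\la a+q$ and spectral parameter $-\la^2$ — exactly the adaptation you describe. Your induction on $k$ with a sub-induction on $m$, resting on the closure of the $T_j^{r,s}$ family under products and differentiation, is the standard argument carried out there, and your index bookkeeping ($r=k+2$, $s=k+3-j$ at level $k+1$) checks out.
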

\begin{lemma}\label{Lem:r.exp}
Let $ n\in \mathbb{N}_{0}$, $a,q\in C^{n+1}( \R ) $  and functions $\{\psi'_k\}_{k\in [[-1,n-1]]} $ be determined by \eqref{alle psi'},  $\{\phi_{k}\}_{k\in[[-1,2n-1]]} $ be as in \eqref{phik.def} and $r_n$ as in \eqref{rn.def}. Then
\begin{equation} \label{Abschatzungr_nwelle}
|r_n| \ls  \dfrac{|(2\la a+q)^{(n+1)}|}{|\la^2+2\la a+q|^{\frac{n+1}{2}}}+\sum_{k=0}^{n-1}\dfrac{1}{|\la^2+2\la a+q|^{\frac{n-1+k}{2}}}\sum_{j=2}^{n+1+k}\dfrac{T_{j}^{n+1+k,n}}{|\la^2+2\la a+q|^{j}},
\end{equation}
where $T_{j}^{r,s}$ are as in \eqref{Tjrs.def}.
\end{lemma}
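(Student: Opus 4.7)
The plan is to substitute the definitions into \eqref{rn.def}, expand termwise, and then apply Lemma~\ref{Lem:T.exp} to each resulting term. Writing
\begin{equation}
r_n = \sum_{k=n-1}^{2(n-1)} \la^{-k} \psi_k''
- \sum_{k=n-1}^{2(n-1)} \la^{-k} \sum_{\substack{\omega+\chi=k \\ \omega,\chi\neq-1}} c_{\omega,\chi}\, \psi_\omega' \psi_\chi',
\end{equation}
I first observe that the convention $\psi_\omega\equiv 0$ for $\omega\geq n$ eliminates every $\psi_k''$ with $k\geq n$, so the first sum collapses to the single term $\la^{-(n-1)} \psi_{n-1}''$. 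The proof then reduces to controlling this leading term together with the cross products $\la^{-k'} \psi_\omega' \psi_\chi'$ for $\omega+\chi=k'\in[[n-1,2n-2]]$ and $\omega,\chi\in[[0,n-1]]$.

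To handle $\la^{-(n-1)} \psi_{n-1}''$ I apply Lemma~\ref{Lem:T.exp} with $k=n-1$ and $m=2$. In absolute value the prefactor $|(-\la^2)^{(n-1)/2}|=|\la|^{n-1}$ absorbs $\la^{-(n-1)}$, leaving an overall factor $|\la^2+2\la a+q|^{-(n-1)/2}$ multiplying a sum over $j\in[[0,n+1]]$ of $T_j^{n+1,n+2-j}$ divided by $|\la^2+2\la a+q|^j$. The $j=0$ term vanishes since $T_0^{r,s}=0$ whenever $r\geq 1$; the $j=1$ term equals a constant multiple of $(2\la a+q)^{(n+1)}/|\la^2+2\la a+q|^{(n+1)/2}$ and yields the isolated first summand of the claim; and for $j\geq 2$ the constraints $\sum_i i\omega_i=n+1$, $\sum_i\omega_i=j$ force $\omega_i=0$ for every $i\geq n+1$, so $T_j^{n+1,n+2-j}=T_j^{n+1,n}$, matching exactly the $k=0$ slice of the double sum in the claim.

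For the cross products I apply Lemma~\ref{Lem:T.exp} with $m=1$ separately to each of $\psi_\omega'$ and $\psi_\chi'$. The two prefactors combine to $|\la|^{k'}\,|\la^2+2\la a+q|^{-k'/2}$, again cancelling $\la^{-k'}$ in absolute value. The product $T_i^{\omega+1,\omega+2-i}\,T_{i'}^{\chi+1,\chi+2-i'}$ is itself a finite sum of $(i+i')$-fold products of derivatives of $2\la a+q$ whose orders add up to $k'+2$ and are individually bounded by $\max(\omega+1,\chi+1)\leq n$. Setting $j:=i+i'\geq 2$ (the values $i=0$ or $i'=0$ are excluded since $T_0^{\omega+1,\cdot}=0$ when $\omega+1\geq 1$) and $k:=k'-(n-1)\in[[0,n-1]]$, these cross terms are dominated by $T_j^{n+1+k,n}/|\la^2+2\la a+q|^{(n-1+k)/2+j}$, which is precisely the corresponding summand of the second sum of the claim.

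The main obstacle is the combinatorial bookkeeping in this last step: one has to verify that every monomial arising from $T_i\cdot T_{i'}$ fits the defining constraints of $T_{i+i'}^{k'+2,n}$, that the full index range $j\in[[2,n+1+k]]$ is recovered as $\omega,\chi,i,i'$ vary, and that no derivative of order exceeding $n$ ever appears. As remarked in the text, these computations amount to a notational translation (with $V:=2\la a+q$ and spectral parameter $-\la^2$) of the fully worked out argument in~\cite[Appendix]{Krejcirik-2019-276} and introduce no genuinely new analytic input.
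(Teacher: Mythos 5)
Your argument is correct: the collapse of the $\psi_k''$ sum to the single $k=n-1$ term via the convention in \eqref{phik.def}, the identification of the $j=0$ and $j=1$ contributions of $\psi_{n-1}''$ through Lemma~\ref{Lem:T.exp}, and the reindexing of the cross products $\la^{-k'}\psi_\omega'\psi_\chi'$ into $T_{i+i'}^{k'+2,n}$ all check out, and this is exactly the termwise expansion carried out in the proof the paper delegates to \cite[Appendix]{Krejcirik-2019-276}. (Only remark: for an upper bound with $\ls$ you do not actually need to verify that the full range $j\in[[2,n+1+k]]$ is ``recovered''---only that each arising monomial is dominated by a term of the claimed form.)
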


\subsection{Technical lemmas}
\label{subsec:technical.lem.exp}

The first lemma enables us to treat the terms with $k\geq0$ in the expansion of $g$, see \eqref{g.exp.def}.
\begin{lemma}\label{Lem:exp.est.1}
Let Assumptions \ref{Asm:exp} hold, let $\{\psi'_k\}_{k\in[[-1,n-1]]}$ be determined by \eqref{alle psi'} and let, as $b \to +\infty$,
\begin{equation}\label{exp.asm.q}
\begin{aligned}
q_b^{(0)} &=o(\alpha^2+\beta^2),
\\
\forall j \in [[1,n]], \quad q_b^{(j)} &= \BigO\left( \alpha(\alpha+\beta)b^{j \nu} \right).
\end{aligned}
\end{equation}	
Then, for all $ k\in[[0,n-1]]$ and for all $t\in (b-\delta,b+\delta)$
\begin{equation}\label{la.k.psi.k.est}
\big|\la^{-k}\psi_k'(t)\big| 
\ls
\frac{b^{\nu(k+1)}}{(\alpha+\beta)^k}
\sum_{j=1}^{k+1}
\frac{\alpha^j}{(\alpha+\beta)^{j}}, \quad b \to +\infty.
\end{equation}
\end{lemma}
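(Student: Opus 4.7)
The plan is to read off the bound directly from the explicit formula for $\psi_k'$ provided in Lemma~\ref{Lem:T.exp} with $m=1$, and then control each ingredient using the size estimates for $\zeta = -\la^2 - 2\la a - q$ together with the growth hypotheses on $a$ and $q$.

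First I would specialize Lemma~\ref{Lem:T.exp} at $m=1$ and $k \in [[0,n-1]]$ to obtain
\begin{equation}
\psi_k'(t) = \frac{(-\la^2)^{k/2}}{\zeta(t)^{k/2}} \sum_{j=0}^{k+1} \frac{T_j^{k+1,\,k+2-j}(t)}{\zeta(t)^j}.
\end{equation}
Because the hypothesis \eqref{exp.asm.q} on $q_b^{(0)}$ puts us within the scope of Lemma~\ref{Lem:zeta.est}, we get $|\zeta(t)|^{1/2} \approx \alpha + \beta$ uniformly for $t \in (b-\delta, b+\delta)$. Combined with $|(-\la^2)^{k/2}| = |\la|^k \approx (\alpha+\beta)^k$, the outer prefactor is of order one, while each power $|\zeta|^{-j}$ contributes $(\alpha+\beta)^{-2j}$.

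Next I would estimate each $T_j^{k+1,\,k+2-j}$ via its definition \eqref{Tjrs.def}. Every factor in its monomials has the form $(2\la a + q)^{(i)}$ with $1 \leq i \leq k+1 \leq n$, so I would combine Assumption~\ref{Asm:exp}(c) together with \eqref{a.const} to get $|a^{(i)}(t)| \ls \alpha\, b^{i\nu}$ on the cut-off interval, and the hypothesis $q_b^{(i)} = \BigO(\alpha(\alpha+\beta) b^{i\nu})$ for the potential. Using $|\la| \approx \alpha+\beta$, both contributions match, yielding the joint bound $|(2\la a + q)^{(i)}(t)| \ls \alpha(\alpha+\beta)\, b^{i\nu}$. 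Inserting this into a monomial indexed by $\omega \in \mathcal{I}_j^{k+1,\,k+2-j}$ and using the two defining constraints $\sum_i \omega_i = j$ and $\sum_i i\omega_i = k+1$ collapses the product to the clean estimate
\begin{equation}
|T_j^{k+1,\,k+2-j}(t)| \ls (\alpha(\alpha+\beta))^j\, b^{\nu(k+1)}.
\end{equation}
A useful side remark is that $\mathcal{I}_0^{k+1,\,k+2}$ is empty whenever $k+1 \geq 1$ (the two constraints are incompatible for $j=0$), so the $j=0$ summand vanishes and the sum effectively runs over $j \in [[1,k+1]]$, exactly as the stated bound indicates.

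Finally I would assemble all pieces: multiplying the formula for $\psi_k'$ by $|\la|^{-k} \approx (\alpha+\beta)^{-k}$ and substituting the estimates above gives
\begin{equation}
|\la^{-k} \psi_k'(t)| \ls \frac{1}{(\alpha+\beta)^k} \sum_{j=1}^{k+1} \frac{(\alpha(\alpha+\beta))^j\, b^{\nu(k+1)}}{(\alpha+\beta)^{2j}} = \frac{b^{\nu(k+1)}}{(\alpha+\beta)^k} \sum_{j=1}^{k+1} \frac{\alpha^j}{(\alpha+\beta)^j},
\end{equation}
which is the claim. No single step is analytically deep; the only real bookkeeping subtlety is tracking the two combinatorial constraints defining $\mathcal{I}_j^{k+1,\,k+2-j}$ simultaneously, so as to see that the product of derivative bounds yields precisely $\alpha^j(\alpha+\beta)^j b^{\nu(k+1)}$ with the correct separation of powers of $\alpha$ and of $\alpha+\beta$.
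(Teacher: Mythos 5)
Your proposal is correct and follows essentially the same route as the paper's proof: specialize Lemma~\ref{Lem:T.exp} at $m=1$, use Lemma~\ref{Lem:zeta.est} to replace $|\zeta|$ by $\alpha^2+\beta^2\approx|\la|^2$, bound each factor $(2\la a+q)^{(i)}$ by $\alpha(\alpha+\beta)b^{i\nu}$ via \eqref{asm:exp.a'}, \eqref{a.const} and \eqref{exp.asm.q}, and collapse the monomials using the two constraints defining $\mathcal{I}_j^{k+1,k+2-j}$. Your explicit remark that $\mathcal{I}_0^{k+1,k+2}$ is empty (so the $j=0$ term drops out) is a small clarification the paper leaves implicit, but otherwise the arguments coincide.
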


\begin{proof}
The essential ingredient of the proof is Lemma \ref{Lem:T.exp}. We omit writing the argument $t$, but we always use that $t\in (b-\delta,b+\delta)$. From \eqref{psik.exp.T}, the  definition of $T_j^{r,s}$, see \eqref{Tjrs.def}, and $\big|\la^2+2\la a+q \big|=|\zeta|\approx\alpha^2+\beta^2\approx|\la|^2$, see Lemma \ref{Lem:zeta.est},  we get 
\begin{equation}
\begin{aligned}
\big|\la^{-k}\psi_k'\big| 
&\ls 
\sum_{j=1}^{k+1} \frac{|T_j^{k+m,k+2-j}|}{|\la^2+2\la a+q |^{j+\frac{k}{2}}}
\\
&\ls
\sum_{j=1}^{k+1}
\sum_{\omega\in \mathcal I_j^{k+1,k+2-j}}
\frac{
|2\la a'+q'|^{\omega_{1}}\cdot...\cdot |2\la a^{(k+2-j)} +q^{(k+2-j)} |^{\omega_{k+2-j}}}{|\la|^{2j+k}}.
\end{aligned}
\end{equation}
The assumptions \eqref{asm:exp.a'} and \eqref{exp.asm.q} give further that
\begin{equation}
\big|\la^{-k}\psi_{k}'\big| 
\ls
\sum_{j=1}^{k+1}
\sum_{\omega\in \mathcal I_j^{k+1,k+2-j}}
\frac{|\la \alpha b^\nu|^{\omega_{1}}\cdot...\cdot |\la \alpha b^{\nu(k+2-j)} |^{\omega_{k+2-j}}}{|\la|^{2j+k}}.
\end{equation}
The definition of $\mathcal{I}_j^{r,s}$, see \eqref{Ijrs.def}, yields  $\sum_{i=1}^{k+2-j}i\omega_i=k+1$ and $\sum_{i=1}^{k+2-j}\omega_i=j$, thus \eqref{la.k.psi.k.est} follows.
\end{proof}

The next aim is to estimate $|g|$. It turns out that with the assumptions above the result for the basic pseudomode with $n=0$ remains valid (with possibly different constants), see~Lemma~\ref{Lem:Re.psi.-1.est}.

\begin{lemma}\label{Lem:|g|.exp}
Let the assumptions of Lemma~\ref{Lem:exp.est.1} hold and suppose in addition that
\begin{equation}\label{lem:exp.asm.cutoff-2}
\frac{b^\nu}{\alpha+\beta} = \BigO(1), \quad b \to + \infty.
\end{equation}
Let $g$ be defined as in \eqref{g.exp.def}. Then there exist two positive constants $c_3, c_4 >0$ such that for all $s\in(b-\delta,b+\delta)$,
we have, as $b \to +\infty$,
\begin{equation}\label{est:|g|.exp}
\exp{\left(c_3\dfrac{\beta}{\alpha+\beta}a'(b)(s-b)^2\right)}
\ls |g(s)|\ls
\exp{\left( c_4\dfrac{\beta}{\alpha+\beta}a'(b)(s-b)^2\right)}.
\end{equation}	
\end{lemma}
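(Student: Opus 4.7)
The plan is to express $|g(s)|$ as a pure exponential of the real part of the phase in \eqref{g.exp.def} and then to show that the higher-order WKB corrections $\la^{-k}\psi_k$ with $k \geq 0$ contribute only an $o(1)$ additive term to that exponent. The two-sided bound for $|g|$ will then reduce to the one for $\Re(\ii\la\psi_{-1})$ already established in Lemma~\ref{Lem:Re.psi.-1.est}, with the new constants $c_3, c_4$ essentially inherited from $c_1, c_2$.

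First I would fix the antiderivatives in \eqref{alle psi'} by imposing $\psi_k(b)=0$ for every $k \in [[-1,n-1]]$; the equations only determine $\psi_k$ up to a constant, and any such constant would merely rescale $g$ by a uniformly bounded multiplicative factor that can be absorbed into the $\ls$ and $\gs$. With this normalization
\[
|g(s)| = \exp\Bigl(-\Re(\ii\la\psi_{-1}(s)) - \sum_{k=0}^{n-1}\Re(\la^{-k}\psi_k(s))\Bigr),
\]
so the leading $\psi_{-1}$ term is immediately controlled by Lemma~\ref{Lem:Re.psi.-1.est}.

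Second, for each $k \in [[0,n-1]]$ I would control the correction by the fundamental theorem of calculus combined with Lemma~\ref{Lem:exp.est.1}: for $s \in (b-\delta, b+\delta)$,
\[
|\Re(\la^{-k}\psi_k(s))| \leq \left|\int_b^s |\la^{-k}\psi_k'(t)|\, \dd t\right| \ls \delta \,\frac{b^{\nu(k+1)}}{(\alpha+\beta)^k}\sum_{j=1}^{k+1}\frac{\alpha^j}{(\alpha+\beta)^j}.
\]
Using $\alpha/(\alpha+\beta)\leq 1$, $\delta=b^{-\nu-\eps}$, and the hypothesis \eqref{lem:exp.asm.cutoff-2} that $b^\nu/(\alpha+\beta)=\BigO(1)$, the right-hand side is majorised by a constant multiple of $\bigl(b^\nu/(\alpha+\beta)\bigr)^{k}\,b^{-\eps}=\BigO(b^{-\eps})=o(1)$ uniformly in $s$. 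Summing over finitely many $k$ preserves this $o(1)$ bound. Combining the two steps one gets $|g(s)|=e^{-\Re(\ii\la\psi_{-1}(s))}\,e^{o(1)}$, and Lemma~\ref{Lem:Re.psi.-1.est} then delivers the advertised two-sided estimate (with $c_3$, $c_4$ taken arbitrarily close to $c_1$, $c_2$).

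The main obstacle is the combinatorial bookkeeping hidden inside Lemma~\ref{Lem:exp.est.1}: one must carefully track how the powers of $b^\nu/(\alpha+\beta)$ accumulate out of the multinomial expressions $T_j^{r,s}$, and verify that \eqref{lem:exp.asm.cutoff-2} is precisely what prevents these powers from growing with $k$ and $n$. A minor subtlety is also that the assumption \eqref{exp.asm.q} on $q$ is needed not just to make the leading phase well-defined via Lemma~\ref{Lem:zeta.est}, but also so that the contributions of $q^{(j)}$ to $T_j^{r,s}$ fit into the same combinatorial scheme as $a^{(j)}$; once these are in place, the remaining steps are a routine application of the lemmas of Section~\ref{subsec:technical.lem.exp}.
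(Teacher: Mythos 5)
Your proof is correct and follows essentially the same route as the paper: both reduce the two-sided bound to Lemma~\ref{Lem:Re.psi.-1.est} by showing that the corrections $\la^{-k}\psi_k$ contribute only $o(1)$ to the exponent, using Lemma~\ref{Lem:exp.est.1}, the length $\delta$ of the integration interval, and the hypothesis \eqref{lem:exp.asm.cutoff-2} to control the accumulating powers of $b^{\nu}/(\alpha+\beta)$. The only (harmless) difference is that the paper treats $k=0$ separately, integrating \eqref{psi0'.def} explicitly to get $|\exp(-\psi_0(s))|=|\zeta(s)/\zeta(b)|^{1/4}\approx 1$, whereas you subsume $k=0$ into the same uniform bound, which also works since $\delta\, b^{\nu}\,\alpha/(\alpha+\beta)\ls b^{-\eps}$.
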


\begin{proof}
First we deal with the terms with $k > 0$ in the expansion, the case $k=0$ is treated separately and differently. With Lemma \ref{Lem:exp.est.1} and assumption \eqref{lem:exp.asm.cutoff-2} we get 
\begin{equation}\label{exp.bdd}
\Bigg|\sum_{k=1}^{n-1}\int_{b}^{s}\la^{-k}\psi_{k}'(t) \, dt\Bigg| 
\ls 
\sum_{k=1}^{n-1}\sum_{j=1}^{k+1}
\frac{b^{\nu k - \eps}  \alpha^j}{(\alpha + \beta)^{j+k}}
\ls
b^{-\eps}, \quad b \to +\infty.
\end{equation}
In the case $k=0$, we use the formula for $\psi_0'$, see \eqref{psi0'.def}, which leads to
\begin{equation}
|\exp(-\psi_0(s))|=\left|\exp \left(-\int_{b}^{s}\psi_0'(t) \, dt \right) \right| =\Bigg|\dfrac{\la^2+2\la a(s)+q(s)}{\la^2+2\la a(b)+q(b)}  \Bigg|^{\frac{1}{4}}=\left|\dfrac{\zeta(s)}{\zeta(b)}  \right|^{\frac{1}{4}}.
\end{equation}
With Lemma \ref{Lem:zeta.est} and \eqref{zeta.def}, we get that for all $s\in(b-\delta,b+\delta)$
\begin{equation}\label{psi_0.est}
|\exp(-\psi_0(s))|\approx 1, \quad b \to +\infty.
\end{equation}	

Now we are ready to estimate $|g|$. Using \eqref{psi_0.est} and \eqref{exp.bdd}, we get, as $b \to +\infty$,
\begin{equation}
\begin{aligned}
|g| &= \exp \Big(\Re \Big(-\ii\la\psi_{-1}-\sum_{k=0}^{n-1}\la^{-k}\psi_k \Big) \Big)
=  |\exp(-\psi_0 + o(1))| |\exp (\Re (-\ii\la\psi_{-1}))|,
\end{aligned}
\end{equation}
which holds for all $s\in(b-\delta,b+\delta)$. Thus \eqref{est:|g|.exp} follows from Lemma~\ref{Lem:Re.psi.-1.est}.
\end{proof}

The next step is to estimate $\| \xi g \|^2$ from below and $ \| \xi' g' \|^2$, $\| \xi'' g \|^2$ from above. In fact, we show that under our assumptions, these estimates remain the same as for the basic pseudomode, see Lemma~\ref{Lem:exp.xi.xi'g}.

\begin{lemma}\label{Lem:exp.xi.xi'g}
Let the assumptions of Lemma~\ref{Lem:|g|.exp} hold. Let $\xi$, $g$ be defined as in \eqref{xi.def}, \eqref{g.exp.def}, respectively. Then, as $b \to + \infty$,  
\begin{align}\label{xig.eq.exp}
\| \xi g\|^2 &\gs \delta 
\exp \left(
- \frac{c_4}{4} \frac{\beta}{\alpha+\beta} a'(b) \delta^2
\right),
\\
\label{xi'g'.eq.exp}
\| \xi' g'\|^2 &\ls (\alpha^2+\beta^2)\delta^{-1} \exp\left({-\frac{c_4}{2} \frac{\beta}{\alpha+\beta}a'(b)\delta^2}\right),
\\
\label{xi''g.eq.exp}
\| \xi'' g\|^2&\ls \delta^{-3} \exp\left({-\frac{c_4}{2}\frac{\beta}{\alpha+\beta}a'(b)\delta^2}\right).
\end{align} 
\end{lemma}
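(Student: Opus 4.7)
My plan is to reduce the three estimates to their basic-case analogues in Lemma~\ref{Lem:bas.xi.xi'g}, exploiting the fact that Lemma~\ref{Lem:|g|.exp} supplies two-sided Gaussian bounds on $|g|$ of exactly the same structural form as in the basic case (with $c_3, c_4$ playing the role of $c_1, c_2$). For \eqref{xig.eq.exp}, I would restrict the integration to $[b-\delta/2,\,b+\delta/2]$ on which $\xi\equiv 1$, insert the lower bound on $|g|$ from Lemma~\ref{Lem:|g|.exp}, then shrink the interval further to $[b-\delta/(2k),\,b+\delta/(2k)]$ for a suitable fixed $k>1$ chosen so as to produce the claimed exponential factor. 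For \eqref{xi''g.eq.exp}, I would use $\|\xi''\|_\infty\ls\delta^{-2}$ on $\supp\xi''\subset\{\delta/2\leq|s-b|\leq\delta\}$, together with the upper bound on $|g|$ from Lemma~\ref{Lem:|g|.exp} and an elementary Gaussian-tail estimate. Both steps are essentially line-for-line repetitions of the corresponding computations in Lemma~\ref{Lem:bas.xi.xi'g}.

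The only genuinely new ingredient, needed for \eqref{xi'g'.eq.exp}, is the pointwise control of $|g'|$. Differentiating \eqref{g.exp.def} yields
\begin{equation}
g' = -\Bigl(\ii\la\psi_{-1}' + \sum_{k=0}^{n-1}\la^{-k}\psi_k'\Bigr)\,g.
\end{equation}
From \eqref{zeta.def} and Lemma~\ref{Lem:zeta.est} one has $|\la\psi_{-1}'| = |\zeta|^{1/2} \ls \alpha+\beta$. Lemma~\ref{Lem:exp.est.1}, combined with $\alpha\leq\alpha+\beta$ and the hypothesis $b^\nu/(\alpha+\beta)=\BigO(1)$, gives $|\la^{-k}\psi_k'| \ls b^\nu$ uniformly in $k\in[[0,n-1]]$, which is dominated by $\alpha+\beta$. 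Hence $|g'| \ls (\alpha+\beta)|g|$ on $(b-\delta,b+\delta)$, so that \eqref{xi'g'.eq.exp} follows by the same Gaussian-tail argument as for $\|\xi''g\|^2$, with $\|\xi'\|_\infty\ls\delta^{-1}$ and the extra factor $(\alpha+\beta)^2$ from $|g'|^2$ combining to give the claimed prefactor $(\alpha^2+\beta^2)\delta^{-1}$.

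The main obstacle I anticipate is the bookkeeping around the WKB corrections: one must verify that the crude pointwise bounds from Lemma~\ref{Lem:exp.est.1} are indeed absorbed by the leading contribution $\ii\la\psi_{-1}'$ under assumption \eqref{lem:exp.asm.cutoff-2}, and that the passage from the $|g|$ bounds of Lemma~\ref{Lem:|g|.exp} to the correct exponential constants in the final answers is consistent across all three estimates. Once the dominance of $\la\psi_{-1}'$ in the expression for $g'$ is established, the remaining work reduces to the integrations already performed in Lemma~\ref{Lem:bas.xi.xi'g}.
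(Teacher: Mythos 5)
Your proposal is correct and follows essentially the same route as the paper: the bounds for $\|\xi g\|^2$ and $\|\xi''g\|^2$ are reduced directly to Lemma~\ref{Lem:bas.xi.xi'g} via the two-sided Gaussian bounds of Lemma~\ref{Lem:|g|.exp}, and for $\|\xi'g'\|^2$ the paper likewise differentiates \eqref{g.exp.def} and uses Lemma~\ref{Lem:zeta.est}, Lemma~\ref{Lem:exp.est.1} and \eqref{lem:exp.asm.cutoff-2} to show the corrections $\la^{-k}\psi_k'$ are dominated by $|\la\psi_{-1}'|\approx\alpha+\beta$. Your key observation that $|g'|\ls(\alpha+\beta)|g|$ on the relevant support is exactly the paper's argument.
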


\begin{proof}
Using the previously proved lemmas, we reduce the proof to the estimates obtained in Lemma~\ref{Lem:bas.xi.xi'g} for the basic pseudomode; with regard to Lemma~\ref{Lem:|g|.exp}, this is immediate for the estimates of $\|\xi g\|$ and $\| \xi'' g\|$. The remaining term $\|\xi' g'\|$ is estimated using Lemmas~\ref{Lem:zeta.est}, \ref{Lem:exp.est.1} and assumption \eqref{lem:exp.asm.cutoff-2}. Recalling \eqref{xi'.est} and the size of $\supp \xi'$, see \eqref{xi'.supp}, we obtain
\begin{equation}
\begin{aligned}
\| \xi' g'\|^2 
&\ls 
\int_{\supp \xi'}\|\xi'\|_{\infty}^2|g'(s)|^2\, \dd s 
\\
&\ls  
\delta^{-2} \int_{\supp \xi'} 
\Big(
\underbrace{|\la\psi'_{-1}(s)|^2}_{=|\zeta|\approx(\alpha+\beta)^2}+\sum_{k=0}^{n-1}| \la^{-k}\psi'_{k}(s)|^2
\Big)
|g(s)|^2 \, \dd s
\\ 
& \ls 
\delta^{-2}\int_{\supp \xi'} 
\Big( 
(\alpha+\beta)^2 + b^{2\nu}
\Big)
|g(s)|^2 \, \dd s, 
\\ 
& \ls 
\delta^{-2}\int_{\supp \xi'} (\alpha+\beta)^2(1+\BigO(1))|g(s)|^2 \, \dd s, \qquad b \to + \infty.
\end{aligned}
\end{equation}
Thus \eqref{xi'g'.eq.exp} follows from Lemma~\ref{Lem:bas.xi.xi'g} as well.
\end{proof}

Finally, we estimate the remainder $r_n$, see \eqref{rn.def}.

\begin{lemma}\label{Lem:rn.est.exp}
Let the assumptions of Lemma~\ref{Lem:|g|.exp} hold and let $r_n$ be as in \eqref{rn.def}.
Then
\begin{equation}\label{rn.est.exp}
\| r_n\|_{\infty,(b-\delta,b+\delta)} 
\ls 
\frac{\alpha b^{\nu(n+1)}}{(\alpha+\beta)^n}
+
\sum_{k=1}^{n-1} \frac{b^{\nu(n+k+1)} \alpha^2}{(\alpha+\beta)^{n+1+k}} = \kappa_2(b), \qquad b \to + \infty.
\end{equation}
\end{lemma}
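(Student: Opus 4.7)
The plan is to start from the pointwise estimate for $|r_n|$ provided by Lemma~\ref{Lem:r.exp} and reduce every factor to elementary bounds. The key inputs, valid uniformly on $(b-\delta,b+\delta)$, are $|\la^2+2\la a+q|\approx(\alpha+\beta)^2$ from Lemma~\ref{Lem:zeta.est}; $|a^{(m)}(t)|\ls b^{m\nu}a(b)=\alpha b^{m\nu}$ for $m\in[[1,n+1]]$, which follows from \eqref{asm:exp.a'} combined with the almost-constant behavior \eqref{a.const} and the choice of $\delta$; and $|q^{(j)}(t)|\ls\alpha(\alpha+\beta)b^{j\nu}$ from \eqref{Thm:exp.asm.q}. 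Together with $|\la|\approx\alpha+\beta$, these give the single uniform bound $|(2\la a+q)^{(i)}(t)|\ls\alpha(\alpha+\beta)b^{i\nu}$ for every relevant order $i$, which will feed every factor appearing in \eqref{Abschatzungr_nwelle}.

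The first summand of \eqref{Abschatzungr_nwelle} is then immediate:
\begin{equation*}
\frac{|(2\la a+q)^{(n+1)}|}{|\la^2+2\la a+q|^{(n+1)/2}}\ls\frac{\alpha(\alpha+\beta)b^{\nu(n+1)}}{(\alpha+\beta)^{n+1}}=\frac{\alpha b^{\nu(n+1)}}{(\alpha+\beta)^n},
\end{equation*}
which is exactly the first term of $\kappa_2(b)$. For the double sum I would exploit the multi-index structure of $T_j^{r,s}$ in \eqref{Tjrs.def}: for each $\omega\in\mathcal{I}_j^{n+1+k,n}$ the constraints $\sum_{i}i\omega_i=n+1+k$ and $\sum_{i}\omega_i=j$ yield
\begin{equation*}
|T_j^{n+1+k,n}|\ls\prod_{i=1}^{n}\bigl(\alpha(\alpha+\beta)b^{i\nu}\bigr)^{\omega_i}=\bigl(\alpha(\alpha+\beta)\bigr)^{j} b^{\nu(n+1+k)}.
\end{equation*}
Dividing by $|\la^2+2\la a+q|^{(n-1+k)/2+j}\approx(\alpha+\beta)^{n-1+k+2j}$, the generic $(j,k)$ contribution collapses to $\bigl(\alpha/(\alpha+\beta)\bigr)^{j}\, b^{\nu(n+1+k)}/(\alpha+\beta)^{n-1+k}$.

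Since $\alpha/(\alpha+\beta)\leq 1$, the maximum over $j\in[[2,n+1+k]]$ is attained at $j=2$, so the inner $j$-sum is bounded by $\alpha^2 b^{\nu(n+1+k)}/(\alpha+\beta)^{n+1+k}$. Summing over $k\in[[0,n-1]]$ and noting that the $k=0$ contribution $\alpha^2 b^{\nu(n+1)}/(\alpha+\beta)^{n+1}$ is absorbed into the leading $\alpha b^{\nu(n+1)}/(\alpha+\beta)^{n}$ (gaining a factor $\alpha/(\alpha+\beta)\leq 1$), the remaining indices $k\in[[1,n-1]]$ reproduce exactly the sum defining $\kappa_2(b)$ in \eqref{kappa2.def}. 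The only real obstacle is bookkeeping: tracking the three exponents of $\alpha$, $\alpha+\beta$ and $b^\nu$ that arise from the constraints in \eqref{Ijrs.def}, and recognizing that the identity $\sum_i i\omega_i=n+1+k$ collapses the total $b$-power to $b^{\nu(n+1+k)}$ independently of the chosen multi-index, so the optimization in $j$ is a trivial monotonicity check.
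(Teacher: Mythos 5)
Your proposal is correct and follows essentially the same route as the paper: both start from the pointwise bound of Lemma~\ref{Lem:r.exp}, use Lemma~\ref{Lem:zeta.est} together with \eqref{asm:exp.a'} and \eqref{Thm:exp.asm.q} to bound each factor $(2\la a+q)^{(i)}$ by $\alpha(\alpha+\beta)b^{i\nu}$, exploit the constraints in \eqref{Ijrs.def} to get $|T_j^{n+1+k,n}|\ls(\alpha(\alpha+\beta))^j b^{\nu(n+1+k)}$, and then use $\alpha/(\alpha+\beta)\leq 1$ to reduce to $j=2$ and absorb the $k=0$ term into the leading one. The bookkeeping matches the paper's computation exactly.
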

\begin{proof}
Lemma \ref{Lem:r.exp} on the structure of $r_n$, Lemma \ref{Lem:zeta.est} on the size of $\zeta$ and the assumption \eqref{exp.asm.q} yield 
\begin{equation}\label{rn.est.1}
\begin{aligned}
|r_n| &\ls 
\frac{|(2\la a+q)^{(n+1)}|}{|\la^2+2\la a+q|^{\frac{n+1}{2}}} + \sum_{k=0}^{n-1} \sum_{j=2}^{n+1+k} \frac{|T_{j}^{n+1+k,n}|}{|\la^2+2\la a+q|^{\frac{n-1+k}{2}+j}}
\\
& \ls
\frac{\alpha b^{\nu(n+1)}}{|\la|^n} + \sum_{k=0}^{n-1} \sum_{j=2}^{n+1+k} \frac{|T_{j}^{n+1+k,n}|}{|\la|^{n-1+k+2j}}.
\end{aligned}
\end{equation}

From the definition of $T_{j}^{n+1+k,n}$, see \eqref{Tjrs.def} and \eqref{Ijrs.def}, and assumptions \eqref{asm:exp.a'} and \eqref{exp.asm.q}, we get for all $s \in (b-\delta,b+\delta)$  
\begin{equation}
\begin{aligned}
|T_{j}^{n+1+k,n}(s)| & \ls  
\sum_{\omega\in \mathcal{I}_j^{n+1+k,n}} 
\big|2\la a'(s)+q'(s)\big|^{\omega_1} \cdot...\cdot \big|2\la a^{(n)}(s)+q^{(n)}(s)\big|^{\omega_{n}}
\\
& \ls
\sum_{\omega\in \mathcal{I}_j^{n+1+k,n}} (\alpha |\la| b^\nu)^{\omega_1} \cdot...\cdot (\alpha |\la| b^{\nu n})^{\omega_n}
\\
& \ls
(\alpha |\la|)^j b^{\nu(n+1+k)}.
\end{aligned}
\end{equation} 
Inserting this into \eqref{rn.est.1} and observing that $\alpha/|\la| \approx \alpha/(\alpha+\beta) \leq 1$, which enables us to skip the term with $k=0$ in the sum, we obtain the estimate \eqref{rn.est.exp} in the claim.
\end{proof}

\subsection{The proof of the main Theorem~\ref{Thm:exp}}

Lemmas~\ref{Lem:|g|.exp} and \ref{Lem:rn.est.exp} are analogues of Lemmas \ref{Lem:Re.psi.-1.est} and \ref{Lem:bas.R} and so the proof of the main Theorem~\ref{Thm:exp} becomes a direct analogue of the one of Theorem~\ref{Thm:bas}.

\begin{proof}[Proof of Theorem \ref{Thm:exp}]
As in the proof of Theorem~\ref{Thm:bas}, the estimates are split into three parts. 
Using \eqref{T.f.la} we arrive at (neglecting $\|(\xi g)'\|^2 +\|q^\frac12 \xi g \|^2 $ as before in the proof of Theorem~\ref{Thm:bas})
\begin{equation}
\frac{\| T(\la)\xi g\|^2}{\|(\xi g)' \|^2 +\|q^\frac12 \xi g \|^2 +|\la|^2\|\xi g\|^2} 
\ls  
\underbrace{\dfrac{\|\xi''g\|^2}{|\la|^2\|\xi g\|^2}}_{=:Q_1}+\underbrace{\dfrac{\|\xi'g'\|^2}{|\la|^2\|\xi g\|^2}}_{=:Q_2}+\underbrace{\dfrac{\| r_n\|_{\infty,(b-\delta,b+\delta)}^{2}\|\xi g\|^2}{|\la|^2\|\xi g\|^2}}_{=:Q_3}.
\end{equation}
The terms $Q_1$, $Q_2$, $Q_3$ are estimated in the completely same way as in the proof of Theorem~\ref{Thm:bas}, nevertheless, appropriate replacements of technical steps, \ie~Lemmas~\ref{Lem:|g|.exp} and \ref{Lem:rn.est.exp}, are employed and definitions of $\kappa_1$, $\kappa_2$ are used, see \eqref{kappa1.def}, \eqref{kappa2.def}.
\end{proof}

\subsection{Examples}
\label{subsec:ex2}

\begin{example}\label{Ex:pol.2}(Polynomial-like dampings and potentials  -- continued)
	
We consider non-negative dampings $a \in C^{n+1}(\R)$ and potentials $q \in C^{n+1}(\R)$ with $n>1$   satisfying, similarly as in \eqref{a.q.pol.ex.1},  
\begin{equation}\label{a.q.pol.ex.2}
\forall x \gs 1, \quad a(x)=x^p, \quad |q^{(j)}(x)| \ls x^{r-j} 
\quad p,r\in\mathbb{R_{+}}, \quad j \in [[0,n]].
\end{equation}
It is easy to see that Assumption~\ref{Asm:exp} holds with $\nu=-1$.

Notice that the decay of $\kappa_1(b,c)$, see \eqref{kappa1.def}, is exponential for every $c>0$ if $\beta=\beta(b) \gs b^s$ with $s>-1$, thus the decay rate in \eqref{Thm:exp.claim} comes from the remainder, \ie~from $\kappa_2(b)$. For smaller $\beta$, one needs to compare the decay rates of $\kappa_1$ and $\kappa_2$; we omit discussing such cases in the following. 

Choosing $\beta=\beta(b)$ similarly in \eqref{Ex.pol.beta.1}, namely, 
\begin{equation}\label{Ex.pol.beta.2}
\begin{aligned}
&\text{if} \quad r\geq 2p, \qquad \beta(b) \gs b^{s}, \quad s> r-p,
\\&\text{if} \quad r< 2p, \qquad \beta(b) \gs b^{s}, \quad s > -1,
\end{aligned}
\end{equation} 
we can easily check that all conditions \eqref{Thm:exp.asm.q.0}, \eqref{Thm:exp.asm.q} and \eqref{Thm:exp.asm.cutoff-2} are indeed satisfied. 

Finally, we determine the decay rates of $\kappa_2$ in \eqref{Thm:exp.claim}. Ignoring even the contributions of $\beta$, we get (in fact from the first term in \eqref{kappa2.def})
\begin{equation}\label{rate.pol.exp}
\kappa_2(b) = \BigO( b^{-(n-1)(p+1)-2}), \qquad b \to + \infty.
\end{equation}
\end{example}

\newpage

\begin{example}(Exponential dampings and potentials -- continued).

Similarly as in Example \ref{Ex:exp.1}, we consider non-negative $a,q \in C^{n+1}(\R)$ with $n>1$ of the form
\begin{equation}\label{a.q.exp.ex.2}
\forall x \gs 1, \quad a(x)=e^{x^p}, \quad |q^{(j)}(x)| \ls x^{r-j} e^{x^r}, \quad p,r\in\R_{+}, \ r \leq p, \quad j \in [[0,n]], 
\end{equation}
which satisfy Assumption~\ref{Asm:exp} with $\nu = p-1$. 

It is straightforward to check that the condition~\eqref{kappa1.def} holds if we choose $\beta=\beta(b)$ in the following way
\begin{equation}\label{Ex.exp.beta.2}
\begin{aligned}
&\text{if} \quad p \geq 1, \qquad \beta(b) \gs b^s, \quad s> p-1,
\\&\text{if} \quad p<1, \qquad \beta(b)>0.
\end{aligned}
\end{equation} 
The remaining conditions \eqref{Thm:exp.asm.q.0}, \eqref{Thm:exp.asm.q} and \eqref{Thm:exp.asm.cutoff-2} are satisfied due to $r \leq p$. 

The second term in \eqref{Thm:exp.claim} can be estimated as
\begin{equation}
\kappa_2(b) = \BigO(b^{(p-1)(n+1)} e^{(1-n) b^p}), \qquad b \to + \infty.
\end{equation}
If $\beta$ is small, then the estimate of $\kappa_1(b,C)$ might be lengthier and the rate can be effectively slower than of $\kappa_2(b)$. Nonetheless, for \eg~$\beta \gs \alpha$, we have
\begin{equation}
\kappa_1(b,C) = \BigO(e^{-\tilde C e^{b^p}}), \qquad b \to + \infty,
\end{equation}
with some $\tilde C>0$, \ie~a much faster decay than in $\kappa_2$. 

Notice that the set of suitable curves along which $\la$ can tend to infinity, \ie~the restrictions on $\beta$, see \eqref{Ex.exp.beta.2}, is substantially enlarged for $p \geq 1$ comparing to \eqref{Ex.exp.beta.1}.

\end{example}

\begin{example}(Logarithmic dampings and potentials -- continued).

Finally, we consider non-negative $a,q\in C^{n+1}(\R)$ with $n>1$ of the form 
\begin{equation}\label{a.q.log.exp.ex.2}
\forall x \gs 1, \quad a(x)=\ln(x), \quad |q^{(j)}(x)| \ls x^{-j} \ln x, \quad j \in [[0,n]],
\end{equation}
which satisfies Assumption~\ref{Asm:exp} with $\nu =-1$. Similarly as in Example~\ref{Ex:log.1}, the conditions~\eqref{kappa1.def}, \eqref{Thm:exp.asm.q.0}, \eqref{Thm:exp.asm.q} and \eqref{Thm:exp.asm.cutoff-2} hold without any restriction on $\beta = \beta(b)>0$. If $\beta$ is not too small, \eg~$\beta(b) \gs b^s$ with $s >-1$, the term $\kappa_1(b,C)$ exhibits much faster decay (exponential in $b$) than $\kappa_2$, for which we obtain
\begin{equation}
\kappa_2(b) = \BigO((\ln b)^{1-n} b^{-n-1}), \qquad b \to + \infty.
\end{equation}

\end{example}

{\footnotesize
	\bibliographystyle{acm}
	\bibliography{C:/Data/00Synchronized/references}
}

\end{document}